\documentclass[12pt, reqno]{amsart}

\usepackage{amsfonts, amsmath, amssymb, amsthm}
\usepackage{geometry}
\usepackage{enumitem}
\usepackage{hyperref}
\usepackage{graphicx}
\graphicspath{{figures/}}
\usepackage{pgfplots}
\pgfplotsset{compat=1.18}
\usepackage{tikz}
\usetikzlibrary{calc,hobby,patterns,datavisualization}
\usepackage{pdfpages}

\newtheorem{theorem}{Theorem}[section]
\newtheorem{lemma}[theorem]{Lemma}
\newtheorem{proposition}[theorem]{Proposition}
\newtheorem{corollary}[theorem]{Corollary}
\newtheorem{definition}[theorem]{Definition}

\newtheorem{remark}[theorem]{Remark}
\newtheorem{example}[theorem]{Example}

\title[CROSS WORK ON A SYMMETRIC GROUP]{CROSS WORK INDUCED BY TRANSFORMATIONS ON THE SYMMETRIC GROUP AND RELATED TRANSFORMATION SEMIGROUPS}
\date{November 2025}
\subjclass[2020]{Primary 20M20, 20M18; Secondary 05A05, 05A15, 20B30}
\keywords{Transformation semigroups, finite transformation, cross work, permutation statistics, banded permutation, symmetric group, distance-based invariants}

\author[N.~E.~Udo, V.~A.~Owai and M~O.~Francis,]{Nsibiet~E.~Udo$^\ast$, Victor~A.~Owai, and Moses~O.~Francis}
\thanks{$^\ast$ Corresponding author}

\begin{document}
\begin{abstract}
	We introduce a new permutation statistic, called the \emph{cross work}, on four important subsemigroups of the partial transformation semigroup $\mathcal{PT}_n$: the symmetric group $\mathcal{S}_n$, the semigroup of singular transformations $\mathrm{Sing}_n$, the symmetric inverse semigroup $\mathcal{I}_n$, and the semigroup $\mathcal{POI}_n$ of partial order-preserving injective transformations. The cross work is defined via local distance interactions between adjacent positions and their images. Fundamental invariance properties are established, including inversion invariance under a natural symmetry condition and a characterization of conjugations preserving the statistic. Using elementary combinatorial methods, we derive closed formulas for the total and average cross work on $\mathcal{S}_n$. We also determine the exact minimum and maximum values of the cross work, showing that the maximal value grows quadratically with $n$.
\end{abstract}

\maketitle

\section{Introduction}
Permutation statistics play a central role in algebraic combinatorics, representation theory, and the theory of transformation semigroups. Classical statistics such as inversions, descents, the major index, and displacement encode diverse structural properties of permutations and often admit rich algebraic and enumerative interpretations.

In the context of transformation semigroups, quantitative notions of work were systematically introduced by East and McNamara~\cite{EastMcNamara2011}, motivated by earlier ideas attributed to Lavers~\cite{Lavers2004}. Their work provided a rigorous framework for numerical invariants that measure the effort required to move points under transformations in full and partial transformation semigroups, thereby opening a new quantitative direction in semigroup theory. Closely related displacement-based measures for permutations and transformations have also been investigated in the combinatorial literature, notably by Daly and Vojtěchovský~\cite{DalyVojtechovsky2009}, as well as Laradji and Umar \cite{LaradjiUmar2004}, who studied order-preserving partial transformations from a combinatorial viewpoint.

Following this foundational work, several authors have extended the notion of work and related invariants to more structured algebraic settings. In particular, Kehinde et al.~\cite{Kehinde.et.al.2020} examined numerical work and power on the symmetric group, providing explicit computations and graphical interpretations. Further generalizations were developed for $m$-topological transformation semigroup spaces and their subclasses, including regular, closed, and clopen spaces \cite{FrancisAdenijiMogbonju2023,FrancisAdeniji2025}. These studies demonstrate that work-type invariants interact naturally with additional algebraic and topological structure, leading to rich combinatorial behavior.

The study of permutation statistics more broadly has a long and influential history in algebraic combinatorics. Classical statistics such as inversions, descents, major index, and displacement have been extensively studied; see, for example, the works of MacMahon \cite{MacMahon1915}, Stanley \cite{Stanley2012}, and Bóna \cite{Bona2012}. Distance-based statistics, including total displacement and Spearman-type measures, have proven particularly useful in connecting permutation theory with geometry and probability \cite{DiaconisGraham1977,DalyVojtechovsky2009}. Comprehensive background on transformation semigroups and their algebraic properties may be found in the works of Howie \cite{Howie1995} and Umar \cite{Umar2016}.

The symmetric group $\mathcal{S}_n$, as the group of units of the full transformation semigroup $\mathcal{T}_n$, and $\mathrm{Sing}_n = \mathcal{T}_n \setminus \mathcal{S}_n$, its unique maximal ideal (commonly known as the semigroup of singular transformations), provide a natural and mathematically rich setting in which to investigate such numerical invariants.

In this paper, we introduce a new permutation statistic on $\mathcal{S}_n$, called the \emph{cross work}. Roughly speaking, the cross work of a permutation measures how far the images of adjacent positions deviate from their original locations, taking into account both left and right interaction effects. Unlike classical displacement-based measures studied in \cite{EastMcNamara2011,DalyVojtechovsky2009,FrancisAdeniji2025}, the cross work captures local adjacency interactions while remaining globally computable and amenable to closed-form analysis. Furthermore, in contrast to global displacement measures such as those in \cite{DalyVojtechovsky2009}, the cross work is local in nature: it aggregates the effort contributed by each adjacent pair separately, so that its extremal permutations admit an explicit block description (Theorem~\ref{thm:max-count}) and its minimisers are counted by a Fibonacci recurrence (Proposition~\ref{prop:mincount}).

Section~\ref{sec:prelim-defn} introduces the cross work statistic and presents basic examples, together with the notions of total and average cross work. Section~\ref{sec:main_results} is devoted to invariance properties, including inversion invariance (Theorem~\ref{thm:inversion-invariance}) and the characterisation of conjugations preserving the statistic Theorem~\ref{thm:invariance_condition}). It also contains the main results on the total and average cross work on $\mathcal{S}_n$ (Theorem~\ref{thm:Sn} and Corollary~\ref{cor:avg-crosswork}, respectively), and on the semigroup of singular transformations $\mathrm{Sing}_n$ (Theorem~\ref{thm:Sing}), the symmetric inverse semigroup of injective partial transformations $\mathcal{I}_n$ (Theorem~\ref{thm:I}), and the monoid of partial order-preserving injections $\mathcal{POI}_n$ (Theorem~\ref{thm:POI}). We conclude with remarks highlighting connections to classical permutation statistics and outlining directions for further study.

\section{Preliminaries and Definitions}\label{sec:prelim-defn}
Throughout, let \([n] = \{1, 2, \dots, n\}\) be a finite set with \(n\) elements. A \textit{full transformation} is a map \(\alpha\colon [n]\to[n]\); the set \(\mathcal T_n\) of all such maps is the full transformation semigroup under composition, with identity the identity map. A \textit{partial transformation} on \([n]\) is a map \(\alpha\colon \operatorname{dom}(\alpha)\subseteq[n]\to[n]\); the set \(\mathcal{PT}_n\) of all partial transformations forms the partial transformation semigroup under composition. A \textit{finite transformation semigroup on \([n]\)} is any subsemigroup \(\mathcal G\subseteq\mathcal{PT}_n\); thus every \(\alpha\in\mathcal G\) is a partial transformation on \([n]\), and \(\operatorname{dom}(\alpha)\) is defined as below. We denote by $\mathcal{S}_n$ the symmetric group on $[n]$, i.e. the subgroup of \(\mathcal{PT}_n\) of all permutations of \([n]\). Permutations are written in one-line notation.

\begin{definition}\label{def:local_crosswork}
Let $n \ge 2$ be an integer and let $\mathcal G\subseteq \mathcal{PT}_n$ be a finite transformation semigroup on $[n]$. For a partial transformation $\alpha \in\mathcal G$, denote its domain by $\operatorname{dom}(\alpha)$. For each $1 \le i \le n-1$, define the \emph{left distance} and \emph{right distance} of $\alpha\in\mathcal G$ over adjacent positions $(i, i+1)$ by
\[
\delta_{i, \alpha}^{L} = |i - \alpha(i+1)|, \qquad
\delta_{i, \alpha}^{R} = |(i+1) - \alpha(i)|
\]
when $i, i+1 \in \operatorname{dom}(\alpha)$, and zero otherwise; with this convention the sum below may be taken over all \(1\le i\le n-1\). The (local) \emph{cross work} done by $\alpha\in\mathcal G$ over the adjacent pair $(i, i+1)$ is defined to be
\[
w_i^\times = \delta_{i, \alpha}^L + \delta_{i, \alpha}^R,
\]
and the \emph{cross work} of $\alpha$ is
\[
w^\times(\alpha) = \sum_{\substack{1 \le i \le n-1 \\ i,i+1 \in \operatorname{dom}(\alpha)}} w_i^\times = \sum_{i=1}^{n-1} \left( \delta_{i, \alpha}^{L} + \delta_{i, \alpha}^{R} \right).
\]
\end{definition}

\begin{example}
Let $\alpha = 231\in\mathcal{S}_3$, so that \(\alpha(1)=2\), \(\alpha(2)=3\), \(\alpha(3)=1\). A direct computation gives
\[
\delta_{1, \alpha}^{L} = |1-\alpha(2)| = |1-3| = 2, \quad
\delta_{1, \alpha}^{R} = |2-\alpha(1)| = |2-2| = 0,
\]
\[
\delta_{2, \alpha}^{L} = |2-\alpha(3)| = |2-1| = 1, \quad
\delta_{2, \alpha}^{R} = |3-\alpha(2)| = |3-3| = 0.
\]
Hence,
\[
w^\times(\alpha) = \sum_{i=1}^{2} \left(\delta_{i, \alpha}^{L} + \delta_{i, \alpha}^{R}\right) = (2+0) + (1+0) = 3.
\]
Table~\ref{tab:crosswork} lists the cross work values for all permutations in $\mathcal{S}_3$.

\begin{table}[h]
\centering
\renewcommand{\arraystretch}{1.2}
\setlength{\tabcolsep}{12pt}
\caption{Values of the cross work $w^\times(\alpha)$ for all permutations $\alpha \in \mathcal{S}_3$}
\label{tab:crosswork}
\begin{tabular}{c|c}
\hline
$\alpha \in \mathcal{S}_3$ & $w^\times(\alpha)$ \\
\hline
123 & 4 \\
132 & 3 \\
213 & 3 \\
231 & 3 \\
312 & 3 \\
321 & 4\\
\hline
\end{tabular}
\end{table}
\end{example}

\begin{definition}\label{def:total_crosswork}
For a fixed integer $n \ge 2$, let $\mathcal G$ be a finite transformation semigroup on $[n]$. The \emph{total cross work} on $\mathcal G$ is defined by
\[
w^\times(\mathcal G) = \sum_{\alpha \in\mathcal G} w^\times(\alpha).
\]
The \emph{average cross work} on $\mathcal G$ is
\[
\overline{w}^\times(\mathcal G) = \frac{1}{|\mathcal G|} \sum_{\alpha\in\mathcal G} w^\times(\alpha) = \frac{w^\times(\mathcal G)}{|\mathcal G|}.
\]
\end{definition}

\begin{example}
For $n = 3$, we see from Table~\ref{tab:crosswork} that
\[
w^\times(\mathcal{S}_3) = \sum_{\alpha \in \mathcal{S}_3} w^\times(\alpha) = 4 + 3 + 3 + 3 + 3 + 4 = 20.
\]
\end{example}

\section{Main Results}\label{sec:main_results}
In this section, we present the main results concerning cross work on $\mathcal{S}_n$, $\mathcal{T}_n$, $\mathrm{Sing}_n$, $\mathcal{I}_n$, and $\mathcal{POI}_n$.

\begin{definition}
Let $\alpha, \beta \in \mathcal{S}_n$. We say that $\alpha$ and $\beta$ are \emph{cross-work invariant} if $w^\times(\alpha) = w^\times(\beta)$.
\end{definition}

We begin with a sufficient condition under which cross work is preserved by inversion. 

\begin{lemma}\label{lem:equivalence_of_distances}
Let $n \ge 2$ and let $\alpha \in \mathcal{S}_n$. If $\alpha$ transposes $1$ and $n$ (i.e., $\alpha(1)=n$ and $\alpha(n)=1$), then the right distances of $\alpha^{-1}$ are exactly the left distances of $\alpha$, and vice versa. More precisely,
\[
\{\delta_{i, \alpha}^L \mid 1 \le i \le n-1\} = \{\delta_{j, \alpha^{-1}}^R \mid 1 \le j \le n-1\}
\]
and
\[
\{\delta_{i, \alpha}^R \mid 1 \le i \le n-1\} = \{\delta_{j, \alpha^{-1}}^L \mid 1 \le j \le n-1\}.
\]
\end{lemma}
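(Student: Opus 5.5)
The plan is to exhibit an explicit bijection between the index sets $\{1,\dots,n-1\}$ that carries each left distance of $\alpha$ to an equal right distance of $\alpha^{-1}$, and similarly for the other pair. This gives the stated set equalities, and in fact equality as multisets, which is what one needs later for inversion invariance.

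\textbf{First equality.} Fix $1\le i\le n-1$ and put $j=\alpha(i+1)$, so that $\alpha^{-1}(j)=i+1$. Then
\[
\delta^L_{i,\alpha}=|i-\alpha(i+1)|=|(\alpha^{-1}(j)-1)-j|=|(j+1)-\alpha^{-1}(j)|,
\]
which is $\delta^R_{j,\alpha^{-1}}$ as soon as $1\le j\le n-1$. As $i$ runs over $\{1,\dots,n-1\}$, $i+1$ runs over $\{2,\dots,n\}$. Since $\alpha$ is a bijection, $j=\alpha(i+1)$ then runs over $[n]\setminus\{\alpha(1)\}$. Here the hypothesis $\alpha(1)=n$ enters: it says exactly that $j$ runs over $\{1,\dots,n-1\}$. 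Hence the map $i\mapsto \alpha(i+1)$ is a bijection $\{1,\dots,n-1\}\to\{1,\dots,n-1\}$ satisfying $\delta^L_{i,\alpha}=\delta^R_{\alpha(i+1),\alpha^{-1}}$, and the first equality follows.

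\textbf{Second equality.} Symmetrically, for $1\le i\le n-1$ put $j=\alpha(i)-1$, so that $\alpha^{-1}(j+1)=i$. Then
\[
\delta^R_{i,\alpha}=|(i+1)-\alpha(i)|=|(\alpha^{-1}(j+1)+1)-(j+1)|=|j-\alpha^{-1}(j+1)|=\delta^L_{j,\alpha^{-1}},
\]
provided $1\le j\le n-1$, that is, $2\le \alpha(i)\le n$. As $i$ ranges over $\{1,\dots,n-1\}$, $\alpha(i)$ ranges over $[n]\setminus\{\alpha(n)\}$. The hypothesis $\alpha(n)=1$ makes this exactly $\{2,\dots,n\}$. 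So $i\mapsto\alpha(i)-1$ is again a bijection of $\{1,\dots,n-1\}$ matching the distances.

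\textbf{Where the work lies.} There is no real obstacle. The only point requiring care is the index bookkeeping: the substitution produces a valid index $j\in\{1,\dots,n-1\}$ precisely because of the two boundary conditions. Condition $\alpha(1)=n$ alone gives the first equality and $\alpha(n)=1$ alone gives the second. I would remark that the argument proves the stronger multiset statement, since that is what is needed to conclude $w^\times(\alpha)=w^\times(\alpha^{-1})$ by summing.
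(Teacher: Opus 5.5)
Your proof is correct and uses the same reindexing $j=\alpha(i+1)$ (so $\alpha^{-1}(j)=i+1$) as the paper; the paper proves each set equality by two separate inclusions and gets the second identity by applying the first to $\alpha^{-1}$, whereas you observe directly that $i\mapsto\alpha(i+1)$ and $i\mapsto\alpha(i)-1$ are bijections of $\{1,\dots,n-1\}$. This gives you the multiset (indexed) equality, which is what Theorem~\ref{thm:inversion-invariance} actually needs when it sums the distances, since equality of the underlying sets alone would not force equal sums.
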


\begin{proof}
Let $1 \le i \le n-1$. Since $\alpha(1)=n$ and $\alpha$ is a bijection, we have $1 \le \alpha(i+1) \le n-1$. Furthermore, $\delta_{i, \alpha}^L = \delta_{\alpha(i+1), \alpha^{-1}}^R$. Setting $j = \alpha(i+1)$, we see that $\delta_{i, \alpha}^L \in \{\delta_{j, \alpha^{-1}}^R \mid 1 \le j \le n-1\}$. This proves the first inclusion.

Now let $1 \le j \le n-1$. Notice that $\alpha^{-1}$ also transposes $1$ and $n$, so $2 \le \alpha^{-1}(j) \le n$. Subtracting $1$, we get $1 \le \alpha^{-1}(j)-1 \le n-1$. Moreover, $\delta_{j, \alpha^{-1}}^R = \delta_{\alpha^{-1}(j)-1, \alpha}^L$. Setting $i = \alpha^{-1}(j)-1$, we see that $\delta_{j, \alpha^{-1}}^R \in \{\delta_{i, \alpha}^L \mid 1 \le i \le n-1\}$. This proves the reverse inclusion, hence the first equality.

Since $(\alpha^{-1})^{-1} = \alpha$, the same argument applied to $\alpha^{-1}$ yields the second equality.
\end{proof}

\begin{theorem}\label{thm:inversion-invariance}
Let $\alpha \in \mathcal{S}_n$ be such that $\alpha(1)=n$ and $\alpha(n)=1$. Then $\alpha$ and $\alpha^{-1}$ are cross-work invariant.
\end{theorem}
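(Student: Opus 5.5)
The plan is to write $w^\times(\alpha)$ as the sum of all left distances plus the sum of all right distances, and to match these two sums termwise with the right and left distance sums of $\alpha^{-1}$, using the correspondence from Lemma~\ref{lem:equivalence_of_distances}. One caution: that lemma is stated as an equality of \emph{sets}, and sets forget multiplicities. Equal sets do not give equal sums, so I will not quote the lemma's conclusion directly. Instead I will extract the bijection that its proof actually constructs, and use it as a reindexing of the sum.

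\emph{Step 1.} Define $\phi\colon[n-1]\to[n-1]$ by $\phi(i)=\alpha(i+1)$.
\begin{itemize}
\item It is well defined: $\alpha(1)=n$ and $\alpha$ is injective, so $\alpha(i+1)\neq n$ for $1\le i\le n-1$.
\item It is injective because $\alpha$ is, so it is a bijection of $[n-1]$ by finiteness.
\end{itemize}
A direct check gives
\[
\delta^R_{\phi(i),\alpha^{-1}}=|\alpha(i+1)+1-\alpha^{-1}(\alpha(i+1))|=|\alpha(i+1)-i|=\delta^L_{i,\alpha}.
\]
Reindexing by $\phi$ therefore gives
\[
\sum_{i=1}^{n-1}\delta^L_{i,\alpha}=\sum_{j=1}^{n-1}\delta^R_{j,\alpha^{-1}}.
\]

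\emph{Step 2.} The hypothesis is symmetric: $\alpha^{-1}(1)=n$ and $\alpha^{-1}(n)=1$. So the same argument applied to $\alpha^{-1}$, with the bijection $i\mapsto\alpha^{-1}(i+1)$, gives
\[
\sum_{i=1}^{n-1}\delta^L_{i,\alpha^{-1}}=\sum_{j=1}^{n-1}\delta^R_{j,\alpha}.
\]

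\emph{Step 3.} Add the two identities. By Definition~\ref{def:local_crosswork}, the left-hand sides and right-hand sides then sum to $w^\times(\alpha)$ and $w^\times(\alpha^{-1})$ (in some order), so $w^\times(\alpha)=w^\times(\alpha^{-1})$. That is, $\alpha$ and $\alpha^{-1}$ are cross-work invariant.

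The only real obstacle is the set-versus-multiset issue above, and Step 1 handles it by working with the bijection $\phi$ rather than with the lemma's set equality. The rest is routine bookkeeping.
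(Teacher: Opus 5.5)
Your proof is correct and is essentially the paper's argument: the map $i\mapsto\alpha(i+1)$ is exactly the correspondence built in the proof of Lemma~\ref{lem:equivalence_of_distances}, and by reindexing along it as a bijection of $[n-1]$, rather than quoting the lemma's set equality, you close the set-versus-sum gap that the paper's proof of the theorem passes over. One wording fix in Step 3: adding your two identities literally gives $\sum_i\delta^L_{i,\alpha}+\sum_i\delta^L_{i,\alpha^{-1}}=\sum_j\delta^R_{j,\alpha^{-1}}+\sum_j\delta^R_{j,\alpha}$, which is not the claim, so instead substitute both identities into $w^\times(\alpha)=\sum_i\delta^L_{i,\alpha}+\sum_i\delta^R_{i,\alpha}$ to obtain $w^\times(\alpha^{-1})$.
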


\begin{proof}
By Lemma~\ref{lem:equivalence_of_distances},
\[
\sum_{i=1}^{n-1} \delta_{i, \alpha}^L = \sum_{i=1}^{n-1} \delta_{i, \alpha^{-1}}^R, \quad
\sum_{i=1}^{n-1} \delta_{i, \alpha}^R = \sum_{i=1}^{n-1} \delta_{i, \alpha^{-1}}^L.
\]
Therefore,
\[
w^\times(\alpha) = \sum_{i=1}^{n-1} (\delta_{i, \alpha}^L + \delta_{i, \alpha}^R)
= \sum_{i=1}^{n-1} (\delta_{i, \alpha^{-1}}^R + \delta_{i, \alpha^{-1}}^L)
= w^\times(\alpha^{-1}).
\]
\end{proof}

\begin{example}
Let $\alpha = 53421 \in \mathcal{S}_5$. Then $\alpha^{-1} = 54231$, and a direct computation gives $w^\times(\alpha) = w^\times(\alpha^{-1}) = 14$.
\end{example}

Unlike classical displacement statistics studied in \cite{EastMcNamara2011,FrancisAdenijiMogbonju2023}, cross work is not invariant under arbitrary conjugation in general. The following result characterizes when conjugation preserves cross work.

\begin{proposition}\label{prop:reversal-invariance}
Let $\tau \in \mathcal{S}_n$ be the reversal permutation defined by \(\tau(i) = n+1-i\) for \(1 \le i \le n\). Then for any $\pi \in \mathcal{S}_n$,
\[
w^\times(\tau \pi \tau^{-1}) = w^\times(\pi).
\]
\end{proposition}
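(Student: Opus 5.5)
The plan is a direct computation: write $\sigma = \tau\pi\tau^{-1}$, express its values through $\pi$, and show that each local cross work $w_i^\times(\sigma)$ equals some $w_j^\times(\pi)$, with $i\mapsto j$ a bijection of $\{1,\dots,n-1\}$.

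First I record the formula for $\sigma$. Since $\tau$ is an involution, $\tau^{-1}=\tau$, so for every $k\in[n]$
\[
\sigma(k) = \tau(\pi(\tau(k))) = n+1-\pi(n+1-k).
\]
Fix $1\le i\le n-1$ and set $j = n-i$. Then $1\le j\le n-1$, the pair $(i,i+1)$ is sent by $\tau$ to $(j+1,j)$, and $i\mapsto n-i$ is a bijection of $\{1,\dots,n-1\}$.

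Next I compute the two distances at $i$:
\[
\delta^L_{i,\sigma} = |i - \sigma(i+1)| = |i - n - 1 + \pi(n-i)| = |\pi(j) - (j+1)| = \delta^R_{j,\pi},
\]
\[
\delta^R_{i,\sigma} = |(i+1) - \sigma(i)| = |i+1-n-1+\pi(n+1-i)| = |\pi(j+1) - j| = \delta^L_{j,\pi}.
\]
Adding these gives $w_i^\times(\sigma) = w_{n-i}^\times(\pi)$. Summing over $i$ and reindexing by the bijection $i\mapsto n-i$ then yields $w^\times(\sigma)=w^\times(\pi)$.

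There is no real obstacle; the only care needed is in the index bookkeeping. Conjugation by $\tau$ swaps the roles of left and right distances, because the reversed pair $(j+1,j)$ puts the old "$i+1$" in the place of the new "$i$". So the matching is $\delta^L\leftrightarrow\delta^R$ rather than $\delta^L\leftrightarrow\delta^L$, which mirrors the mechanism already seen in Lemma~\ref{lem:equivalence_of_distances}. Unlike that lemma, no hypothesis on $\pi$ is required, since the reindexing $i\mapsto n-i$ is always a bijection of $\{1,\dots,n-1\}$.
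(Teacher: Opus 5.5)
Your proof is correct and follows the paper's own argument: write $\sigma=\tau\pi\tau$, express $\sigma(i)$ and $\sigma(i+1)$ through $\pi$, and reindex by $j=n-i$ to get $w_i^\times(\sigma)=w_{n-i}^\times(\pi)$. Your explicit observation that $\delta^L_{i,\sigma}=\delta^R_{n-i,\pi}$ and $\delta^R_{i,\sigma}=\delta^L_{n-i,\pi}$ is slightly sharper bookkeeping than the paper's, which records only the equality of the sums.
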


\begin{proof}
Since $\tau^{-1} = \tau$, write $\sigma = \tau \pi \tau$. For $1 \le i \le n-1$,
\[
\sigma(i+1) = \tau(\pi(\tau(i+1))) = \tau(\pi(n-i)) = n+1 - \pi(n-i),
\]
\[
\sigma(i) = \tau(\pi(\tau(i))) = \tau(\pi(n+1-i)) = n+1 - \pi(n+1-i).
\]
Hence,
\[
|i - \sigma(i+1)| = |(n+1-i) - \pi(n-i)|, \quad
|(i+1) - \sigma(i)| = |(n-i) - \pi(n+1-i)|.
\]
Thus,
\[
\delta_{i,\sigma}^L + \delta_{i,\sigma}^R = |(n+1-i) - \pi(n-i)| + |(n-i) - \pi(n+1-i)|.
\]
Letting $j = n-i$, as $i$ runs from $1$ to $n-1$, so does $j$. Therefore,
\[
\delta_{i,\sigma}^L + \delta_{i,\sigma}^R = |j+1 - \pi(j)| + |j - \pi(j+1)| = \delta_{j,\pi}^L + \delta_{j,\pi}^R.
\]
Summing over all indices gives $w^\times(\sigma) = w^\times(\pi)$.
\end{proof}

The preceding proposition shows that cross work admits a nontrivial global symmetry. We now show that this symmetry is essentially unique.

\begin{theorem}\label{thm:invariance_condition}
Let $\tau \in \mathcal{S}_n$ be an involution. Then
\[
w^\times(\tau \pi \tau^{-1}) = w^\times(\pi) \quad \text{for all } \pi \in \mathcal{S}_n
\]
if and only if $\tau$ is either the identity or the reversal permutation $\tau(i) = n+1-i$.
\end{theorem}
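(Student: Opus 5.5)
The ``if'' direction needs no new work. If $\tau$ is the identity, then $\tau\pi\tau^{-1}=\pi$. If $\tau$ is the reversal, the claim is exactly Proposition~\ref{prop:reversal-invariance}. For the ``only if'' direction I will argue by contraposition. Assume $\tau$ is an involution that is neither the identity nor the reversal. I will exhibit a test permutation $\pi$ with $w^\times(\tau\pi\tau^{-1})\neq w^\times(\pi)$. For $n=2$ the only involutions are the identity and the reversal, so we may assume $n\ge 3$. As a sanity check for $n=3$: with $\tau=(1\,2)$ and $\pi=132$ one gets $\tau\pi\tau=321$, and Table~\ref{tab:crosswork} gives $4\neq 3$.

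The natural test family is transpositions, since $\tau(a\,b)\tau^{-1}=(\tau(a)\,\tau(b))$ is again a transposition. So the first step is to compute $f(a,b):=w^\times((a\,b))$ for $a<b$ in closed form. The identity contributes $2$ on every adjacent pair, so $w^\times(\mathrm{id})=2(n-1)$. A transposition changes only the pairs $(a-1,a),(a,a+1),(b-1,b),(b,b+1)$, with the obvious truncation when $a=1$ or $b=n$. Hence $f(a,b)$ depends only on $d=b-a$ and on which of the boundary conditions $a=1$, $b=n$ hold.

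A quick computation for an adjacent swap $(k\;k+1)$ gives the following local contributions:
\begin{itemize}
\item $0$ on the pair $(k,k+1)$;
\item $3$ on the pair $(k-1,k)$;
\item $3$ on the pair $(k+1,k+2)$.
\end{itemize}
So $f=2(n-1)$ in the interior, $f=2(n-1)-1$ if exactly one endpoint is $1$ or $n$, and $f=0$ when $n=2$. For $d\ge 2$ the affected pairs contribute terms of size roughly $d\pm1$. This should give $f(a,b)=2(n-1)+c\,d+O(1)$, growing strictly with $d$ once boundary cases are fixed.

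Granting these formulas, invariance on all transpositions forces $f(\tau(a),\tau(b))=f(a,b)$. The aim is to deduce two things from this:
\begin{enumerate}
\item[(i)] $\tau$ maps adjacent pairs to adjacent pairs, i.e.\ $\tau$ is an automorphism of the path graph $1-2-\cdots-n$;
\item[(ii)] $\tau$ preserves the boundary status of $\{1,n\}$.
\end{enumerate}
The only automorphisms of a path are the identity and the reversal, which finishes the proof. Concretely, if some adjacent pair $\{k,k+1\}$ is sent to a pair at distance $d\ge 2$, I compare $f(k,k+1)$ with $f$ at distance $d$. Likewise, if $\tau$ moves $1$ or $n$ into the interior while fixing adjacency, I use the boundary swap $(1\,2)$, whose value $2n-3$ differs from the interior value $2n-2$.

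The main obstacle is step (i): $f$ may fail to separate $d=1$ from small $d$, notably $d=2$, once boundary corrections are included. So the case analysis of $f$ must be done carefully. If a coincidence occurs, my fallback is to replace transpositions by a slightly richer test family, such as $3$-cycles $(k\;k+1\;k+2)$ or products of a transposition with a fixed boundary swap. These can be chosen so that their cross work detects non-adjacency directly.
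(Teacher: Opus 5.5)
Your route is the paper's route: reduce to automorphisms of the path $1-2-\cdots-n$. Using transpositions as test permutations, via $\tau(a\,b)\tau^{-1}=(\tau(a)\,\tau(b))$, is exactly what is needed to justify the step the paper only asserts, namely that conjugation-invariance forces $\tau$ to send adjacent pairs to adjacent pairs. As written, though, your proposal leaves that same step unproved. Step (i) is the entire content of the ``only if'' direction, and you stop at ``this should give $f(a,b)=2(n-1)+c\,d+O(1)$'', an open worry about coincidences at $d=2$, and a fallback to other test families. Until $f$ is computed on non-adjacent pairs and shown to separate them from adjacent ones, the argument is incomplete.

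The computation is short, and your plan works without any fallback. In the positional form $w^\times(\pi)=\sum_j F_j(\pi(j))$, swapping the values at positions $a<b$ (with $d=b-a$) changes only the terms at $a$ and $b$. Each of $a,b$ lying in $\{2,\dots,n-1\}$ changes its term by $2d-2$, and $a=1$ or $b=n$ changes its term by $d-2$. Starting from $w^\times(\mathrm{id})=2n-2$,
\[
f(a,b)=\begin{cases}
2n-6+4d, & 2\le a<b\le n-1,\\
2n-6+3d, & \text{exactly one of } a=1,\ b=n \text{ holds},\\
4n-8, & (a,b)=(1,n).
\end{cases}
\]
Your adjacent-swap values agree with this. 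For $n\ge 4$, adjacent pairs have $f\in\{2n-3,2n-2\}$, while non-adjacent pairs have $f\ge 2n$. Equality $f=2n$ occurs at $\{1,3\}$ and $\{n-2,n\}$, and $f(1,n)=4n-8\ge 2n$. So no coincidence arises at $d=2$. Since $\{a,b\}\mapsto\{\tau(a),\tau(b)\}$ is a bijection of $2$-subsets preserving $f$, it preserves the level set $\{f\le 2n-2\}$, which is exactly the edge set of the path. For $n=3$, use instead $f(1,2)=f(2,3)=3\neq 4=f(1,3)$. This gives (i). Step (ii) is then automatic, because any automorphism of the path fixes $\{1,n\}$ setwise. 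Note also that the involution hypothesis is never used, so the conclusion holds for every $\tau\in\mathcal S_n$.
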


\begin{proof}
Since $w^\times$ depends only on distances between images of adjacent positions, any conjugation preserving cross work must map adjacent pairs to adjacent pairs. Thus $\tau$ induces an automorphism of the path graph $1 - 2 - \cdots - n$. The path graph admits exactly two automorphisms: the identity and the reversal. Conversely, reversal preserves cross work by Proposition~\ref{prop:reversal-invariance}.
\end{proof}

\begin{example}\label{ex:conj}
	For \(n=5\), let \(\tau=54321\in\mathcal S_5\) be the reversal permutation $\tau(i)=6-i$ (Proposition~\ref{prop:reversal-invariance}), and let \(\pi=24153\in\mathcal S_5\). Since \(\tau^{-1}=\tau\), we have \(\sigma=\tau\pi\tau^{-1}=31524\). A direct computation yields
	\[
	w^{\times}(\pi)=|1-4|+|2-2|+|2-1|+|3-4|+|3-5|+|4-1|+|4-3|+|5-5|=11,
	\]
	and similarly \(w^{\times}(\sigma)=11\), in agreement with Proposition~\ref{prop:reversal-invariance}.
\end{example}

We next determine the exact lower bound of the cross work function and characterize all permutations that attain it. To this end, we first recall the standard characterization of \emph{bandwidth-one permutations} (equivalently, tridiagonal permutation matrices), which will be used in the proof of Theorem~\ref{thm:minimizers}.

\begin{lemma}\label{lem:classification}
	Let $\sigma\in\mathcal S_m$ satisfy \(|\sigma(i)-i|\le1\), for
	all \(1\le i\le m\). Then 
	\[
	\sigma
	=
	\prod_{i\in I}(i\,\,i+1),
	\]
	where \(I\subseteq\{1,\ldots,m-1\}\), and \(|i-j|>1\) for all \(i\neq j\).
	
	In other words, every bandwidth-one permutation can be written as a product of pairwise disjoint adjacent transpositions.
\end{lemma}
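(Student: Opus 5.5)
The plan is to argue by strong induction on $m$, peeling off the behaviour of $\sigma$ at the last position $m$. The cases $m=1$ (where $\sigma$ is the identity and $I=\emptyset$) and $m=0$ are trivial. For the inductive step, suppose $m\ge 2$ and the statement holds for all smaller sizes. Since $|\sigma(m)-m|\le 1$ and $\sigma(m)\le m$, there are exactly two cases: $\sigma(m)=m$ or $\sigma(m)=m-1$.

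\emph{Case 1: $\sigma(m)=m$.} Then $\sigma$ fixes $m$ and restricts to a permutation $\sigma'$ of $[m-1]$. This restriction still satisfies $|\sigma'(i)-i|\le 1$. By induction, $\sigma'=\prod_{i\in I'}(i\,\,i+1)$ with $I'\subseteq\{1,\dots,m-2\}$ and no two elements of $I'$ adjacent. Taking $I=I'$ gives the required factorisation of $\sigma$.

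\emph{Case 2: $\sigma(m)=m-1$.} The key step is to show that $\sigma(m-1)=m$. Since $\sigma$ is a bijection and $m-1$ is already hit by $m$, the value $m$ must be $\sigma(j)$ for some $j\ne m$. The band condition $|m-j|\le 1$ then forces $j=m-1$. Hence $\sigma$ swaps $m-1$ and $m$, so $\sigma=(m-1\,\,m)\,\sigma''$, where $\sigma''$ is the restriction of $\sigma$ to $[m-2]$. This restriction is a permutation of $[m-2]$ because $\sigma$ maps $\{m-1,m\}$ onto itself, and it again satisfies the band condition. Induction gives $\sigma''=\prod_{i\in I''}(i\,\,i+1)$ with $I''\subseteq\{1,\dots,m-3\}$, pairwise non-adjacent. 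Set $I=I''\cup\{m-1\}$. Every element of $I''$ is at most $m-3$, so it differs from $m-1$ by more than $1$, which gives the separation condition.

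Finally, I would note that pairwise non-adjacent indices give disjoint transpositions. Disjoint transpositions commute, so the product is well defined regardless of order; this justifies the ``in other words'' formulation.

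The only step needing any care is the bijectivity argument in Case 2, which pins down $\sigma(m-1)=m$ and prevents a longer chain such as a $3$-cycle. The rest is bookkeeping about restricting to an initial segment.
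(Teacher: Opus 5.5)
Your induction is correct. The two cases $\sigma(m)\in\{m-1,m\}$ are exhaustive. The bijectivity argument in Case~2 rightly forces $\sigma(m-1)=m$, so $\sigma$ preserves $\{m-1,m\}$ and restricts to a bandwidth-one permutation of $[m-2]$. The separation condition for $I''\cup\{m-1\}$ follows from $I''\subseteq\{1,\dots,m-3\}$, and your $m=0$ base case is exactly what Case~2 needs when $m=2$.

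The paper gives no argument of its own for this lemma. It only says that a proof ``can be extracted'' from the literature on banded permutations (Albert--Li--Strang--Yu, Samson--Ezerman, Panova). Those papers are really about factoring a permutation of bandwidth $w$ into a bounded number of bandwidth-one factors, each a layer of parallel adjacent transpositions. There the bandwidth-one case is an elementary starting point, not a result that is proved.

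The citation places the lemma as the $w=1$ case of that general factorization theory. Your direct induction instead gives a short, self-contained proof, with no extraction needed. It also isolates the one real point: the band condition together with bijectivity rules out cycles of length at least $3$.
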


A proof can be extracted from the arguments in Albert et al.~\cite[Sections~1--3]{AlbertLiStrangYu2011}, Samson and Ezerman~\cite[Theorem~7]{SamsonEzerman2012}, and Panova~\cite{Panova2010}.

\begin{theorem}\label{thm:minimizers}
Let $n \ge 3$ be an integer and $\alpha\in \mathcal{S}_n$. Then:
\begin{enumerate}
\item If $n=3$, then \(w^\times(\alpha) \ge 3\), with equality if and only if $\alpha\in \mathcal{N}_3=\{(1\,2),(2\,3),(1\,2\,3),(1\,3\,2)\}$.
\item If $n \ge 4$, then \(w^\times(\alpha) \ge 2(n-2)\), with equality if and only if 
\[\alpha \in 	\mathcal{N}_n
=
\left\{
s_1
\left(\prod_{i\in I}s_i\right)
s_{n-1}
:\;
I\subseteq\{3,4,\ldots,n-3\},\;
|i-j|>1\, \text{for all}\, i\neq j\in I
\right\},\]
\end{enumerate}
 where $s_i=(i\,i+1)$ denotes the adjacent transposition in $\mathcal S_n$ and the condition \(|i-j|>1\) ensures that the chosen adjacent transpositions are pairwise disjoint. In particular, the minimum cross work on $\mathcal{S}_n$ is
\[
\min_{\alpha\in \mathcal{S}_n} w^\times(\alpha) =
\begin{cases}
3, & n = 3, \\
2(n-2), & n \ge 4.
\end{cases}
\]
\end{theorem}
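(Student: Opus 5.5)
The plan is to regroup the cross work by the position whose image appears, rather than by the adjacent pair. Every $\alpha\in\mathcal S_n$ has full domain, so after the shift $j=i+1$ in the left distances and $j=i$ in the right distances,
\[
w^\times(\alpha)=\sum_{j=2}^{n}|\alpha(j)-(j-1)|+\sum_{j=1}^{n-1}|\alpha(j)-(j+1)|
=|\alpha(1)-2|+|\alpha(n)-(n-1)|+\sum_{j=2}^{n-1}\Bigl(|\alpha(j)-(j-1)|+|\alpha(j)-(j+1)|\Bigr).
\]
For each interior $j$, the triangle inequality gives $|\alpha(j)-(j-1)|+|\alpha(j)-(j+1)|\ge 2$. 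Equality holds exactly when $j-1\le\alpha(j)\le j+1$. The two boundary terms are nonnegative. This already gives $w^\times(\alpha)\ge 2(n-2)$ for every $n$.

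For $n\ge4$, equality holds if and only if three conditions hold: $|\alpha(j)-j|\le1$ for $2\le j\le n-1$, $\alpha(1)=2$, and $\alpha(n)=n-1$. I would then show that such an $\alpha$ fixes the pairs $\{1,2\}$ and $\{n-1,n\}$. The value $1$ must be hit by some $j$. The value $j=1$ is excluded because $\alpha(1)=2$, and $j=n$ is excluded because $\alpha(n)=n-1\ge3$. Among interior positions, only $j=2$ can map to $1$, so $\alpha(2)=1$. Symmetrically, $\alpha(n-1)=n$. Since $n\ge4$, the sets $\{1,2\}$ and $\{n-1,n\}$ are disjoint, so $\alpha=s_1\,\beta\,s_{n-1}$ with $\beta$ a permutation of $\{3,\dots,n-2\}$ satisfying $|\beta(j)-j|\le1$. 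Lemma~\ref{lem:classification}, applied after relabelling $\{3,\dots,n-2\}$ as $[n-4]$, writes $\beta$ as a product of pairwise disjoint $s_i$ with $i\in\{3,\dots,n-3\}$; this gives exactly $\mathcal N_n$ (for $n=4,5$, $\beta$ is trivial). Conversely, each element of $\mathcal N_n$ satisfies the equality conditions, so it has cross work exactly $2(n-2)$.

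For $n=3$, the boundary conditions $\alpha(1)=2$ and $\alpha(3)=2$ cannot both hold, so at least one boundary term is at least $1$, giving $w^\times\ge3$. Equality requires the interior condition together with exactly one boundary term equal to $0$ and the other equal to $1$. This case is simplest to settle by inspecting Table~\ref{tab:crosswork}: the permutations with value $3$ are $132$, $213$, $231$ and $312$, which are $(2\,3)$, $(1\,2)$, $(1\,2\,3)$ and $(1\,3\,2)$, i.e.\ $\mathcal N_3$.

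The only delicate point is the forcing argument $\alpha(2)=1$ and $\alpha(n-1)=n$, which is where the hypothesis $n\ge4$ enters. It uses the fact that $\alpha(n)=n-1\ne1$ and that the two boundary pairs do not overlap. Once that is in place, the rest follows from Lemma~\ref{lem:classification}.
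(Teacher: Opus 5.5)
Your proof is correct and follows the paper's route. Both regroup $w^\times(\alpha)$ by position, bound each interior term by $|(j-1)-x|+|(j+1)-x|\ge 2$, and in the equality case force $\alpha(1)=2$, $\alpha(2)=1$, $\alpha(n-1)=n$, $\alpha(n)=n-1$ before invoking Lemma~\ref{lem:classification}. The differences are cosmetic: you spell out why $\alpha(2)=1$ and $\alpha(n-1)=n$ are forced (the paper gives this one line), you apply the lemma to the middle block $\{3,\dots,n-2\}$ rather than to all of $\alpha$, and you settle the $n=3$ equality case from Table~\ref{tab:crosswork} rather than by noting that $2\in\{\alpha(1),\alpha(3)\}$.
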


\begin{proof}
	By definition,
	\begin{equation}\label{eq:total_cross_work_Sn}
	w^\times(\alpha)
	=
	\sum_{i=1}^{n-1}|i-\alpha(i+1)|
	+	
	\sum_{i=1}^{n-1}|(i+1)-\alpha(i)|.
	\end{equation}
	Reindexing the first sum of \eqref{eq:total_cross_work_Sn} by $j=i+1$ and the second by $j=i$, we obtain the following useful positional form
	\begin{equation}\label{eq:reindexed}
	w^\times(\alpha)
	=
	\sum_{j=2}^{n-1}
	\Bigl(
	|(j-1)-\alpha(j)|
	+
	|(j+1)-\alpha(j)|
	\Bigr)
	+
	|2-\alpha(1)|
	+
	|(n-1)-\alpha(n)|.
	\end{equation}	
	For every integer $x$ and $j\in\{2,\ldots,n-1\}$,
	\[
	|(j-1)-x|+|(j+1)-x|
	\ge
	|(j+1)-(j-1)|
	=2,
	\]
	by the triangle inequality, with equality if and only if \(x\in\{j-1,j,j+1\}\). Applying this estimate to each summand of \eqref{eq:reindexed} yields
	\begin{equation}\label{eq:LB}
	w^\times(\alpha)
	\ge
	2(n-2)
	+
	|2-\alpha(1)|
	+
	|(n-1)-\alpha(n)|.
	\end{equation}
	
	\medskip
	
	Suppose first that $n=3$. Then
	\[
	w^\times(\alpha)
	\ge
	2
	+
	|2-\alpha(1)|
	+
	|2-\alpha(3)|.
	\]
	Since $\alpha$ is a bijection, $\alpha(1)\neq\alpha(3)$. Hence at least one of the two boundary terms is equal to~$1$, so
	\[
	|2-\alpha(1)|+|2-\alpha(3)|\ge1.
	\]
	Therefore \(w^\times(\alpha)\ge3\). Equality holds precisely when \(\{\alpha(1),\alpha(3)\}=\{1,2\}\) or \(\{2,3\}\), which yields exactly the four permutations \((1\,2),\; (2\,3),\; (1\,2\,3),\; (1\,3\,2)\).
	
	\medskip
	
	Assume now that $n\ge4$. Since the boundary terms in \eqref{eq:LB} are nonnegative,
	\[
	w^\times(\alpha)\ge2(n-2).
	\]	
	Suppose equality holds. Then equality must hold in every application of the triangle inequality. Consequently, \(\alpha(j)\in\{j-1,j,j+1\}\) for all \(2\le j\le n-1\), and \(\alpha(1)=2\), \(\alpha(n)=n-1\). Since $\alpha$ is a permutation, it follows that \(\alpha(2)=1\), \(\alpha(n-1)=n\). Hence
	\[
	|\alpha(j)-j|\le1,
	\]	
	for every \(1\le j\le n\). By Lemma~\ref{lem:classification}, $\alpha$ is a product of pairwise disjoint adjacent transpositions. Because \(\alpha(1)=2\) and \(\alpha(n)=n-1\), the transpositions \(s_1=(1\,2)\), \(s_{n-1}=(n-1\,n)\) must occur. Therefore
	\[
	\alpha
	=
	s_1
	\left(
	\prod_{i\in I}s_i
	\right)
	s_{n-1},
	\]
	where \(I\subseteq\{3,\ldots,n-3\}\), and \(|i-j|>1\) for \(i\neq j\).
	
	Conversely, every permutation of this form satisfies \(|\alpha(j)-j|\le1\) for all \(1\le j\le n\), together with \(\alpha(1)=2\), \(\alpha(n)=n-1\). Hence equality holds in every application of the triangle inequality above, and therefore \(w^\times(\alpha)=2(n-2)\). This completes the proof.
\end{proof}

\begin{lemma}\label{lem:bijection}
	The correspondence
	\[
	I
	\longmapsto
	s_1
	\left(\prod_{i\in I}s_i\right)
	s_{n-1},
	\]
	where \(I\subseteq\{3,\ldots,n-3\}\), and \(|i-j|>1\) for all \(i\neq j\), is injective.
\end{lemma}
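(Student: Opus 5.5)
The plan is to recover $I$ from the permutation $\alpha_I := s_1\left(\prod_{i\in I}s_i\right)s_{n-1}$. If $\alpha_I=\alpha_J$ for two admissible index sets, the recovery will force $I=J$. The basic observation is that all transpositions occurring in this product are pairwise disjoint. Indeed, every $i\in I$ satisfies $3\le i\le n-3$, so the support $\{i,i+1\}$ of $s_i$ is contained in $\{3,\ldots,n-2\}$. This set is disjoint from the supports $\{1,2\}$ of $s_1$ and $\{n-1,n\}$ of $s_{n-1}$. For $n\ge 4$ these last two supports are themselves disjoint. Distinct $i,j\in I$ satisfy $|i-j|>1$, so $s_i$ and $s_j$ also have disjoint supports. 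Hence the product is well defined independently of the order of the factors, and $\alpha_I$ is the involution whose nontrivial cycles are exactly the $2$-cycles listed.

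Next I would read off $I$ from $\alpha_I$ directly. Disjoint cycles act independently, so for $3\le k\le n-3$ we have
\[
\alpha_I(k)=k+1 \iff k\in I .
\]
For the forward direction, if $k\in I$ then the factor $s_k$ sends $k$ to $k+1$, and no other factor moves $k$. For the converse, if $k\notin I$, the point $k$ is either fixed or lies in the support of $s_{k-1}$ with $k-1\in I$, in which case $\alpha_I(k)=k-1$. In either case $\alpha_I(k)\ne k+1$.

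Equivalently, one can argue via the disjoint cycle decomposition. Its uniqueness, a standard fact for permutations, shows that the set of $2$-cycles of $\alpha_I$ is $\{s_1,s_{n-1}\}\cup\{s_i:i\in I\}$. Then $I$ is precisely the set of indices $i\in\{3,\ldots,n-3\}$ for which $(i\,\,i+1)$ appears as a cycle. Therefore $\alpha_I=\alpha_J$ implies
\[
I=\{k:\alpha_I(k)=k+1,\ 3\le k\le n-3\}=\{k:\alpha_J(k)=k+1,\ 3\le k\le n-3\}=J .
\]
This proves injectivity.

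The main obstacle is only bookkeeping: one has to check carefully that the boundary transpositions $s_1$ and $s_{n-1}$ never overlap the interior factors. This is exactly why $I$ is restricted to $\{3,\ldots,n-3\}$ rather than $\{2,\ldots,n-2\}$. For small $n$, namely $n=4,5$, the index range is empty and the map is trivially injective, which I would note separately.
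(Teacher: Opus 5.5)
Your proof is correct and follows essentially the same route as the paper: you show the factors are pairwise disjoint (hence commute) and then read $I$ off the resulting permutation. Your explicit recovery rule $\alpha_I(k)=k+1 \iff k\in I$ for $3\le k\le n-3$ is a more precise version of the paper's remark that an omitted index ``leaves the corresponding adjacent pair fixed.'' Read literally, that remark overlooks the case $k\notin I$, $k-1\in I$, which you treat explicitly.
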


\begin{proof}
	Since the adjacent transpositions indexed by $I$ are pairwise disjoint, they commute. Moreover, each chosen index produces a unique adjacent transposition, while every omitted index leaves the corresponding adjacent pair fixed. Hence, the resulting permutation uniquely determines the subset $I$, establishing injectivity.
\end{proof}

\begin{lemma}\label{lem:fibonacci}
	Let \(a_n:=|\mathcal{N}_n|\). Then
	\[
	a_n=a_{n-1}+a_{n-2},
	\]
	for \(n\ge6\), with initial conditions \(a_4=a_5=1\).
\end{lemma}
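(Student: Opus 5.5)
By Theorem~\ref{thm:minimizers} and Lemma~\ref{lem:bijection}, for $n\ge4$ the number $a_n=|\mathcal N_n|$ equals the number of subsets $I\subseteq\{3,4,\ldots,n-3\}$ with no two consecutive elements. So I plan to reduce the statement to counting independent sets in a path.

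Set $b_m$ to be the number of subsets of an $m$-element interval $\{k+1,\ldots,k+m\}$ with no two consecutive integers. This count does not depend on $k$, because translation preserves consecutiveness. The convention is $b_0=1$, since only the empty set occurs, and $b_{-1}=1$. The interval $\{3,\ldots,n-3\}$ has $n-5$ elements when $n\ge5$ and is empty when $n=4$. Hence $a_n=b_{n-5}$ for $n\ge5$, and $a_4=1$ because only $I=\emptyset$ is allowed. For the initial conditions, $a_4=1$ is immediate, and $a_5=b_0=1$ because $\{3,\ldots,2\}=\emptyset$. For $n=4$ one should also check that $s_1s_3$ is a genuine element, which it is, since $s_1$ and $s_3$ are disjoint. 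This gives exactly one minimiser, consistent with the formula.

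To obtain the recurrence, I would show $b_m=b_{m-1}+b_{m-2}$ for $m\ge1$, using the convention $b_{-1}=1$. For $m=1$ this reads $b_1=2=b_0+b_{-1}$. For $m\ge2$, split the admissible subsets $I$ of $\{k+1,\ldots,k+m\}$ according to whether the largest element $k+m$ lies in $I$.
\begin{itemize}
\item If $k+m\notin I$, then $I$ is an admissible subset of the first $m-1$ elements. This gives $b_{m-1}$ sets.
\item If $k+m\in I$, then $k+m-1\notin I$, and $I\setminus\{k+m\}$ is an arbitrary admissible subset of the first $m-2$ elements. This gives $b_{m-2}$ sets.
\end{itemize}
These two maps are bijections, so $b_m=b_{m-1}+b_{m-2}$. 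Substituting $m=n-5$ yields $a_n=a_{n-1}+a_{n-2}$ whenever $n-5\ge1$, that is, for $n\ge6$.

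The only delicate point is the boundary index. At $n=6$ the recurrence needs $a_4$, which corresponds to $b_{-1}$ and not to an actual interval. I would handle this by checking $n=6$ directly: here $I\subseteq\{3\}$, so $a_6=2=a_5+a_4$. The inductive argument above then covers $n\ge7$, where all indices $m-1,m-2\ge0$ refer to genuine intervals. Everything else is the standard Fibonacci count, with $a_n=F_{n-3}$ when $F_1=F_2=1$.
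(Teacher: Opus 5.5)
Your proof is correct and is essentially the paper's argument. Both split the admissible sets $I\subseteq\{3,\ldots,n-3\}$ according to whether the last index $n-3$ (that is, the transposition $(n-3\;n-2)$) is used, which gives $a_{n-1}$ and $a_{n-2}$ choices respectively; your translation-invariant count $b_m$ and the explicit check at $n=6$ only make precise the boundary bookkeeping that the paper leaves implicit, where $a_4$ is read as the count for the empty index set $\{3,\ldots,1\}$.
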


\begin{proof}
	There are exactly $n-5$ possible interior adjacent transpositions, \((3\,4), (4\,5),\ldots, (n-3\,n-2)\). Every admissible subset either
	
	\medskip\noindent\textit{Case (1).} does not contain the last adjacent transposition $(n-3\,n-2)$, in which case the remaining choices form an admissible subset corresponding to $a_{n-1}$; or
	
	\medskip\noindent\textit{Case (2).} contains $(n-3\,n-2)$, forcing $(n-4\,n-3)$ to be absent, so that the remaining choices correspond to an admissible subset counted by $a_{n-2}$.
	
	These two cases are mutually exclusive and exhaustive. Hence
	\[
	a_n=a_{n-1}+a_{n-2}.
	\]
	The initial values follow immediately from Theorem~\ref{thm:minimizers}: for $n=4$ and $n=5$ there is only one admissible subset, namely the empty set.
\end{proof}

\begin{proposition}\label{prop:mincount}
 	For every integer $n\ge4$, the number of minimizers is
	\[
	|\mathcal{N}_n|
	=
	F_{n-3},
	\]
	where $\{F_n\}_{n\ge1}$ is the Fibonacci sequence \(F_1=F_2=1\), \(F_n=F_{n-1}+F_{n-2}\) for \(n\ge3\).
\end{proposition}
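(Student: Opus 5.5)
We prove the formula by strong induction on $n$, using the recurrence of Lemma~\ref{lem:fibonacci} together with its two initial values, and checking that they agree with the shifted Fibonacci sequence. The index shift comes from matching $n=4,5$ with $F_1=F_2=1$.

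\emph{Base cases.} For $n=4$ and $n=5$ the index set $\{3,\ldots,n-3\}$ is empty. By Theorem~\ref{thm:minimizers} the only admissible $I$ is $\emptyset$, so Lemma~\ref{lem:bijection} gives $a_4=a_5=1$. This equals $F_1=F_2=1$.

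\emph{Inductive step.} Let $n\ge 6$ and assume $a_m=F_{m-3}$ for $4\le m<n$. Lemma~\ref{lem:fibonacci} then gives
\[
a_n=a_{n-1}+a_{n-2}=F_{n-4}+F_{n-5}=F_{n-3}.
\]
The last equality is the Fibonacci recurrence, which is valid because $n-3\ge3$.

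\emph{Main obstacle.} The real content is the validity of Lemma~\ref{lem:fibonacci} itself, and I would double-check it via an independent count. First, Theorem~\ref{thm:minimizers} and Lemma~\ref{lem:bijection} show that $a_n$ is the number of subsets of $\{3,\ldots,n-3\}$ with no two consecutive elements. That set has $n-5$ elements (empty when $n<6$). Second, I would use the classical fact that a path of $k$ elements has $F_{k+2}$ such subsets; this follows from the same last-element case split. With $k=n-5$ this gives $F_{n-3}$, in agreement with the induction.

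The point needing care is the case split. Removing the top index $n-3$ leaves $\{3,\ldots,n-4\}$, which is the index range for $n-1$. Using $n-3$ forbids $n-4$ and leaves $\{3,\ldots,n-5\}$, which is the range for $n-2$. For $n=6$ the second case is the single subset $\{3\}$, and $a_4=1$ counts it correctly, so the recurrence holds from $n=6$ on.
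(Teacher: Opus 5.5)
Your proof is correct and is essentially the paper's argument. You identify $|\mathcal N_n|$ with the number of admissible subsets via Theorem~\ref{thm:minimizers} and Lemma~\ref{lem:bijection}, then apply the recurrence and initial values $a_4=a_5=1$ of Lemma~\ref{lem:fibonacci}; your strong induction just makes explicit the paper's appeal to uniqueness of the Fibonacci sequence. Your independent count via subsets of a path with no two consecutive elements is a valid cross-check. One small correction there: for $n=4$ the index set is empty rather than of size $n-5$, but your base case already covers that.
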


The appearance of the Fibonacci sequence here is consistent with the
observation made in \cite[Remark~23]{SamsonEzerman2012}.

\begin{proof}
	The structural characterization of \(\mathcal{N}_n\) is given by Theorem~\ref{thm:minimizers}. By Lemma~\ref{lem:bijection}, every admissible subset of pairwise disjoint interior adjacent transpositions determines a unique minimizer. Lemma~\ref{lem:fibonacci} shows that the number of admissible subsets satisfies the Fibonacci recurrence with initial conditions \(a_4=a_5=1\), whence $a_n=F_{n-3}$ by uniqueness of the Fibonacci sequence.
\end{proof}

We now turn to the maximization problem for the cross work. In contrast with the minimum problem treated earlier, the maximization problem is global in nature: the contribution at one position depends not only on the value placed there, but also on how the remaining values are distributed. We therefore begin by establishing a structural constraint satisfied by every maximizer, and then optimize over the resulting class of permutations. 

For convenience, define
\[
F_j(x)=
\begin{cases}
|2-x|, & j=1,\\[4pt]
|j-1-x|+|j+1-x|, & 2\le j\le n-1,\\[4pt]
|n-1-x|, & j=n.
\end{cases}
\]
Then \eqref{eq:reindexed} may be rewritten as
\[
w^\times(\alpha)=\sum_{j=1}^n F_j(\alpha(j)).
\]

\begin{lemma}
	\label{lem:exchange}
	Let $1\le p<q\le n$ and let $a<b$. Suppose that a permutation $\alpha\in S_n$ satisfies
	\[
	\alpha(p)=a,\qquad \alpha(q)=b.
	\]
	Let $\alpha'$ be obtained from $\alpha$ by exchanging the values at the positions $p$ and $q$. Then
	\[
	w^\times(\alpha')-w^\times(\alpha)
	=
	\bigl(F_p(b)+F_q(a)\bigr)-\bigl(F_p(a)+F_q(b)\bigr)\ge 0.
	\]
	Moreover, the inequality is strict whenever $a$ and $b$ bracket the
	midpoint $n/2$ and are placed in inverted order, that is, $a$ occurs at the larger index $q$ and $b$ at the smaller index $p$. 
\end{lemma}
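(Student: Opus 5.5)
The approach is to write every $F_j$ as a convex function of the difference $j-x$ and then use the standard fact that $(j,x)\mapsto\varphi(j-x)$ with $\varphi$ convex is inverse Monge. The identity part of the lemma is immediate. Only positions $p,q$ change, and $w^\times(\alpha)=\sum_j F_j(\alpha(j))$, so
\[
w^\times(\alpha')-w^\times(\alpha)=\bigl(F_p(b)+F_q(a)\bigr)-\bigl(F_p(a)+F_q(b)\bigr).
\]

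\textbf{Interior positions.} For $2\le j\le n-1$ we have $F_j(x)=|j-1-x|+|j+1-x|=\varphi(j-x)$ with $\varphi(t)=\max(2,2|t|)$, which is convex in $t$. Take $p<q$ and $a<b$, and put $t_1=p-b<t_2,t_3<t_4=q-a$, where $\{t_2,t_3\}=\{p-a,\,q-b\}$ and $t_1+t_4=t_2+t_3$. Convexity of $\varphi$ then gives
\[
\varphi(t_1)+\varphi(t_4)\ge\varphi(t_2)+\varphi(t_3),
\]
which is exactly the claimed inequality. For the strict case, suppose $a<n/2<b$ and the values sit in inverted order. Then $t_1$ and $t_4$ lie on opposite sides of $0$, far enough that at least one of them is in a region where $\varphi$ is strictly convex relative to the middle points. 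This gives strict inequality; I would verify it by checking the slopes $\pm2$ of $\varphi$ outside $[-1,1]$.

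\textbf{Boundary positions ($p=1$ or $q=n$).} Here $F_1(x)=|2-x|=\psi_1(1-x)$ with $\psi_1(t)=|t+1|$, and $F_n(x)=|n-1-x|=\psi_n(n-x)$ with $\psi_n(t)=|t-1|$. These shifted convex functions differ from the interior $\varphi$, so the single-$\varphi$ Monge argument no longer applies directly. The plan is to compare each boundary $\psi$ with $\varphi/2$ on the relevant ranges of $t$ and redo the four-point convexity comparison case by case.

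This is where I expect the real obstacle. A direct check with $p=1$, $q=2$, $a=1$, $b=2$ gives
\[
(F_1(2)+F_2(1))-(F_1(1)+F_2(2))=(0+2)-(1+2)=-1<0,
\]
so the unconditional ``$\ge0$'' cannot be proved as stated when a boundary position is involved. My first attempt will be to show the inequality at the boundary under the bracketing hypothesis $a<n/2<b$. That is the only form in which it will be used for maximizers, and under it the boundary terms $|2-x|$ and $|n-1-x|$ behave like the interior ones up to a bounded correction of $1$. If that correction cannot be absorbed, the lemma must be read with $p,q$ interior, or with the bracketing assumption.
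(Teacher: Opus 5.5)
Your counterexample is correct, so the lemma as stated is false. Take $\alpha=\mathrm{id}$ and $\alpha'=s_1$. Then $w^\times(\alpha')-w^\times(\alpha)=-1$ for every $n\ge3$; for $n=3$ this is $w^\times(213)=3<4=w^\times(123)$ in Table~\ref{tab:crosswork}. No proof of the unconditional ``$\ge0$'' can exist. The paper's proof breaks exactly where you suspected. It asserts that $x\mapsto F_p(x)-F_q(x)$ is monotone for every $p<q$, but $F_1-F_2$ takes the values $-1,-2,-1,-2,-3$ at $x=1,\dots,5$. For interior indices the paper also has the direction reversed: the difference is nondecreasing, and ``nonincreasing'' would give the opposite inequality. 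Your four-point convexity argument, with $\varphi(t)=\max(2,2|t|)$, $t_1=p-b$, $t_4=q-a$ and $t_1+t_4=t_2+t_3$, correctly proves the inequality for $2\le p<q\le n-1$. That is the true content of the lemma.

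The two remaining steps of your plan would not go through.

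\emph{Strictness.} Bracketing of the values does not force $t_1<0<t_4$. For $n=10$, $p=2$, $q=3$, $a=4$, $b=6$ you get $t_1=-4$, $t_4=-1$, and $F_2(6)+F_3(4)=10=F_2(4)+F_3(6)$. Since $\varphi$ is piecewise linear with kinks only at $\pm1$, the interior inequality is strict if and only if $1$ or $-1$ lies in the open interval $(p-b,\,q-a)$. That is the criterion you should state.

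\emph{Boundary fallback.} The discrepancy at the boundary is not a bounded correction. The reason is that $F_1$ has slope $1$, while an interior $F_q$ has slope $2$ to the right of $q+1$. If $p=1$, $2\le q\le n-1$ and $q+1\le a<b$, the exchange changes $w^\times$ by exactly $-(b-a)$. For instance, $n=10$, $p=1$, $q=2$, $a=3$, $b=10$ gives $-7$, although $a<n/2<b$.

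Requiring the positions to straddle the midpoint as well does not help. If $p=1$ and $2\le a<q<b$ with $q$ interior, the change is $4q-b-3a$. For $n=16$, $q=9$, $a=7$, $b=16$ this gives $-1$. More generally, the choice $n=2m$, $q=m+1$, $a=m-1$, $b=2m$ gives $7-m$, which is unbounded below even under strict two-sided bracketing.

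So the boundary exchange inequality cannot be rescued by any bracketing hypothesis. The salvageable statement is the interior one, together with the kink criterion for strictness. Pairs involving position $1$ or $n$ need a different argument. This also means the strict-exchange step in the paper's half-separation corollary is not justified, since that step uses exactly such pairs.
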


\begin{proof}
	For each fixed position $j$, the map $x\mapsto F_j(x)$ is convex and piecewise linear in $x$. More precisely, for $2\le j\le n-1$,
	\[
	F_j(x)=|j-1-x|+|j+1-x|,
	\]
	while for the boundary positions the one-sided analogues
	\[
	F_1(x)=|2-x|,\qquad F_n(x)=|n-1-x|
	\]
	hold. As $j$ increases, the center of the corresponding absolute-value profile moves to the right. It follows that for $p<q$, the difference
	\[
	x\longmapsto F_p(x)-F_q(x)
	\]
	is nonincreasing. Hence, for $a<b$,
	\[
	F_p(b)-F_q(b)\ge F_p(a)-F_q(a),
	\]
	which is equivalent to
	\[
	F_p(b)+F_q(a)\ge F_p(a)+F_q(b).
	\]
	This proves the first assertion. The inequality is strict when $a$ and $b$ are separated by the midpoint and are placed in misaligned positions, since in that case the exchange moves the larger value leftward and the smaller value rightward, thereby increasing both relevant distances simultaneously.
\end{proof}

The preceding lemma is the basic optimality mechanism, which is a classical Monge property \cite{BurkardKlinzRudolf1996} and their consequences for assignment and matching problems \cite{KorteVygen2018}. It implies that in any maximizing permutation, large values must occur in early positions and small values in late positions, respectively.

\begin{corollary}
	\label{cor:half-separation}
	Let $\mathcal M_n=\{\alpha\in \mathcal S_n: w^\times(\alpha)=\max_{\sigma\in \mathcal S_n} w^\times(\sigma)\}$. If $\alpha\in\mathcal M_n$, then the following hold.
	
	\begin{enumerate}
		\item If $n=2m$, then
		\[
		\alpha(\{1,\dots,m\})=\{m+1,\dots,2m\},
		\qquad
		\alpha(\{m+1,\dots,2m\})=\{1,\dots,m\}.
		\]
		
		\item If $n=2m+1$, then
		\[
		\alpha(\{1,\dots,m\})=\{m+2,\dots,2m+1\},
		\qquad
		\alpha(\{m+2,\dots,2m+1\})=\{1,\dots,m\},
		\]
		and necessarily
		\[
		\alpha(m+1)=m+1.
		\]
	\end{enumerate}
\end{corollary}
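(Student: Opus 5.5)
The plan is to argue by contradiction via the exchange mechanism of Lemma~\ref{lem:exchange}, using the positional form $w^\times(\alpha)=\sum_{j}F_j(\alpha(j))$. Suppose $\alpha\in\mathcal M_n$ violates the separation property. In the even case $n=2m$, a counting argument shows there is a position $p\le m$ with $a=\alpha(p)\le m$. Since $\alpha$ is a bijection, there is then a position $q\ge m+1$ with $b=\alpha(q)\ge m+1$. Thus $p<q$, $a<b$, and the pair sits in ``aligned'' order, with the small value on the left. Let $\alpha'$ be $\alpha$ with the values at $p,q$ exchanged. The goal is to show
\[
\Delta:=F_p(b)+F_q(a)-F_p(a)-F_q(b)>0,
\]
which contradicts maximality. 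The odd case $n=2m+1$ is handled the same way. First separate $\{1,\dots,m\}$ from $\{m+2,\dots,2m+1\}$ by the same exchange. Then the remaining position $m+1$ is forced to carry the remaining value $m+1$. If instead $\alpha(m+1)\neq m+1$, I would exchange the value $m+1$ with the value at $m+1$ and check $\Delta>0$ again.

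The key computation is the strict positivity of $\Delta$. Lemma~\ref{lem:exchange} only gives $\Delta\ge 0$, and its strictness clause is stated loosely, so I would verify strictness directly. For interior positions, $F_j(x)=2|j-x|$ whenever $|j-x|\ge1$, and $F_j(x)=2$ when $|j-x|\le 1$. At the boundary positions, $F_1$ and $F_n$ are single absolute values centred at $2$ and $n-1$. In the generic case, $p\le a$ and $b\le q$ with all distances at least $1$. There $\Delta=2[(b-p)+(q-a)-(a-p)-(q-b)]=4(b-a)>0$. In general I would write each $F_j(x)=2|j-x|-c_j(x)$, where the correction satisfies $0\le c_j(x)\le 2$ and is supported on $|x-j|\le1$ (with the modified form for $j\in\{1,n\}$). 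I would then show that the linear part gains at least $4$, since $b-a\ge1$ and the values straddle the midpoint. The corrections can remove at most the flat-region contributions of the two ``before'' terms, which occur only when $a\in\{p-1,p,p+1\}$ or $b\in\{q-1,q,q+1\}$. A short case check on these near-diagonal configurations, together with the endpoint cases $p=1$ or $q=n$, should give $\Delta\ge 2$.

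The main obstacle is exactly these boundary and near-diagonal cases, and they genuinely matter. For $n=3$ the identity $123$ attains the maximum value $4$ (Table~\ref{tab:crosswork}) yet does not satisfy $\alpha(1)=3$. So the statement must be read for $n\ge4$, and the case analysis must use $n\ge 4$ somewhere. My first attempt would be to treat $p=1$ or $q=n$ separately, using that $F_1(x)=|2-x|$ and $F_n(x)=|n-1-x|$. When $n\ge4$ the half-blocks are large enough that $b-p\ge2$ or $q-a\ge 2$, which keeps the gain strictly positive. I would check $n=4,5$ by hand; for example, the unique candidate shape $3412$ gives $w^\times=10$, which beats $4321$ with value $8$. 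If a residual configuration with $\Delta=0$ survives, the fallback is a second exchange. I would pair the offending small value with a different large value, for instance the one in position $n$ or $n-1$, and show that some choice yields $\Delta>0$.
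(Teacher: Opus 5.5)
\textbf{Verdict: the proposal has a genuine gap at its key step, the claimed strict positivity of $\Delta$.}

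You take the paper's own route. Pick $p\le m$ with $\alpha(p)\le m$ and $q\ge m+1$ with $\alpha(q)\ge m+1$, swap the two values, and claim a strict increase. The paper simply invokes the strictness clause of Lemma~\ref{lem:exchange}. That clause is not justified, so you are right to distrust it, but the case check you expect to give $\Delta\ge2$ also fails.

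\emph{Interior positions.} For $2\le j\le n-1$ the flat part enters with a \emph{plus} sign: $F_j(x)=2|j-x|$ for $x\ne j$, but $F_j(j)=2$. So when $\alpha(p)=p$ and $\alpha(q)=q$, the two ``before'' terms absorb your linear gain of $4$ entirely. For $\alpha=4231$ with $n=4$, $p=2$, $q=3$ one gets $\Delta=F_2(3)+F_3(2)-F_2(2)-F_3(3)=2+2-2-2=0$.

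\emph{Boundary positions.} Here it is worse, because $F_1(x)=|2-x|$ has slope $1$ while $F_q$ has slope $2$. Take $n=2m$, $\alpha(1)=m$, $\alpha(m+1)=2m$. The admissible pair $(p,q)=(1,m+1)$ gives $\Delta=(2m-2)+2-(m-2)-(2m-2)=4-m$, which is negative for $m\ge5$. So for admissible pairs not even $\Delta\ge0$ holds. Lemma~\ref{lem:exchange} itself already fails at the boundary: $p=1,q=2,a=1,b=2$ gives $\Delta=-1$.

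\emph{The fallback.} Choosing a different exchange cannot close this. The permutation $4231$ is not half-separated, yet its six transpositions have cross work $8,7,6,8,7,8$. None exceeds $w^\times(4231)=8$, while the maximum $10$ is attained only at $3412$. Likewise $563412\in\mathcal S_6$ has $w^\times=26<28$, and no transposition increases it. Hence no argument that extracts a strictly improving transposition from a failure of half-separation can prove the corollary. Such an argument uses only the local optimality of $\alpha$ under swaps, and these examples satisfy that. You need genuinely global information.

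\emph{Possible repairs.}
\begin{enumerate}
\item Allow $\Delta=0$ moves, then apply Proposition~\ref{prop:even-linear} (resp.\ \ref{prop:odd-linear}) to the half-separated permutation you reach. For example, $4231\mapsto4321$ has $\Delta=0$, and $4321$ has $\alpha(1)=4\neq3$, hence value $8<10$. The boundary configurations would still need separate treatment.
\item Use a direct global bound. Write
\[
w^\times(\alpha)=2\sum_{j=2}^{n-1}|\alpha(j)-j|+2\,\#\{2\le j\le n-1:\alpha(j)=j\}+|\alpha(1)-2|+|\alpha(n)-n+1|,
\]
and control the displacement cut by cut, via $\sum_{j}|\alpha(j)-j|=2\sum_{k=1}^{n-1}\#\{i\le k:\alpha(i)>k\}$, each count being at most $\min(k,n-k)$.
\end{enumerate}

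You are correct that the statement needs $n\ge4$.
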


\begin{proof}
	Assume first that $n=2m$. If there exist indices $p\le m$ and $q\ge m+1$ such that
	\[
	\alpha(p)\le m,\qquad \alpha(q)\ge m+1,
	\]
	then $\alpha(p)<\alpha(q)$, and by Lemma~\ref{lem:exchange}, swapping these two values strictly increases $w^\times(\alpha)$. This contradicts maximality. Therefore every position in the first half must receive a value from the second half, and conversely.
	
	Now suppose that $n=2m+1$. The same exchange argument shows that the two outer blocks must be completely separated:
	\[
	\alpha(\{1,\dots,m\})=\{m+2,\dots,2m+1\},
	\qquad
	\alpha(\{m+2,\dots,2m+1\})=\{1,\dots,m\}.
	\]
	The only remaining value is $m+1$, so injectivity forces it to occur at the remaining position, namely $m+1$. Hence $\alpha(m+1)=m+1$.
\end{proof}

Corollary~\ref{cor:half-separation} reduces the maximization problem to a much smaller class of permutations. On this half-separated region, the absolute values in \eqref{eq:reindexed} acquire fixed signs, and the cross work becomes linear.

\begin{proposition}
	\label{prop:even-linear}
	Assume that $n=2m$ and that $\alpha\in \mathcal S_{2m}$ satisfies
	\[
	\alpha(\{1,\dots,m\})=\{m+1,\dots,2m\},
	\qquad
	\alpha(\{m+1,\dots,2m\})=\{1,\dots,m\}.
	\]
	Then
	\[
	w^\times(\alpha)=4m^2-2m-1-\alpha(1)+\alpha(2m).
	\]
\end{proposition}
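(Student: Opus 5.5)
The plan is to use the positional form $w^\times(\alpha)=\sum_{j=1}^{n}F_j(\alpha(j))$ obtained from \eqref{eq:reindexed}. The half-separation hypothesis fixes the sign inside every absolute value, so each $F_j(\alpha(j))$ becomes an explicit linear function of $\alpha(j)$. Summing these linear terms and using that the image sets of the two halves are known then gives the formula.

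\emph{Step 1: remove the absolute values.} For $1\le j\le m$ we have $\alpha(j)\ge m+1$.
\begin{itemize}
\item For $2\le j\le m$ this gives $\alpha(j)\ge j+1$, hence $F_j(\alpha(j))=2\alpha(j)-2j$.
\item For $j=1$ it gives $\alpha(1)\ge 2$, hence $F_1(\alpha(1))=\alpha(1)-2$.
\end{itemize}
For $m+1\le j\le 2m$ we have $\alpha(j)\le m$.
\begin{itemize}
\item For $m+1\le j\le 2m-1$ this gives $\alpha(j)\le j-1$, hence $F_j(\alpha(j))=2j-2\alpha(j)$.
\item For $j=2m$ it gives $\alpha(2m)\le 2m-1$, hence $F_{2m}(\alpha(2m))=2m-1-\alpha(2m)$.
\end{itemize}
The only care needed is at the boundary indices $j=m$ and $j=m+1$, where the inequalities $\alpha(m)\ge m+1$ and $\alpha(m+1)\le m$ are exactly what is required. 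The degenerate case $m=1$ also needs a check: there the interior ranges are empty and only the two boundary terms survive.

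\emph{Step 2: sum the linear terms.} The sum of the $\alpha$-values over the first half is fixed by hypothesis, $\sum_{j=1}^m\alpha(j)=\sum_{k=m+1}^{2m}k=m(3m+1)/2$. Therefore
\[
\sum_{j=2}^m\alpha(j)=\tfrac{m(3m+1)}{2}-\alpha(1).
\]
Similarly $\sum_{j=m+1}^{2m}\alpha(j)=m(m+1)/2$, so
\[
\sum_{j=m+1}^{2m-1}\alpha(j)=\tfrac{m(m+1)}{2}-\alpha(2m).
\]
The positional sums are elementary:
\[
\sum_{j=2}^m 2j=m(m+1)-2,\qquad \sum_{j=m+1}^{2m-1}2j=3m^2-3m.
\]

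\emph{Step 3: collect terms.} The coefficient of $\alpha(1)$ is $1-2=-1$. The coefficient of $\alpha(2m)$ is $2-1=+1$. The constant terms combine to
\[
m(3m+1)-m(m+1)+2-2+3m^2-3m-m(m+1)+2m-1=4m^2-2m-1,
\]
which yields $w^\times(\alpha)=4m^2-2m-1-\alpha(1)+\alpha(2m)$.

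The main obstacle is Step 1, specifically making sure the sign determination is valid at every index, including the edges $j=1,m,m+1,2m$ and the small case $m=1$. Once the signs are fixed, the rest is bookkeeping, and I would sanity-check it against a small example such as $n=4$, $\alpha=3412$.
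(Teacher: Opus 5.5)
Your proposal is correct and takes the same route as the paper. Half-separation fixes the sign of every absolute value in the positional form, so each term becomes linear, and the known image sums over the two halves leave only $\alpha(1)$ and $\alpha(2m)$. Your explicit bookkeeping of the constant $4m^2-2m-1$ and of the coefficients $-1$ and $+1$, which the paper compresses into ``collecting terms,'' checks out, including the degenerate case $m=1$.
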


\begin{proof}
	By the half-separation hypothesis, for each $2\le j\le m$ we have
	\[
	\alpha(j)\ge m+1\ge j+1,
	\]
	and hence
	\[
	|j-1-\alpha(j)|+|j+1-\alpha(j)|=2\alpha(j)-2j.
	\]
	Likewise, for each $m+1\le j\le 2m-1$ we have
	\[
	\alpha(j)\le m\le j-1,
	\]
	and therefore
	\[
	|j-1-\alpha(j)|+|j+1-\alpha(j)|=2j-2\alpha(j).
	\]
	The boundary terms satisfy
	\[
	|2-\alpha(1)|=\alpha(1)-2,\qquad |2m-1-\alpha(2m)|=2m-1-\alpha(2m).
	\]
	Substituting these identities into \eqref{eq:reindexed} gives the following
	\[
	w^\times(\alpha)
	=
	(\alpha(1)-2)
	+\sum_{j=2}^{m}(2\alpha(j)-2j)
	+\sum_{j=m+1}^{2m-1}(2j-2\alpha(j))
	+(2m-1-\alpha(2m)).
	\]
	To derive the required formula, we use
	\[
	\sum_{j=1}^{m}\alpha(j)=\sum_{r=m+1}^{2m}r,
	\qquad
	\sum_{j=m+1}^{2m}\alpha(j)=\sum_{r=1}^{m}r.
	\]
	After collecting terms and simplifying, we obtain
	\[
	w^\times(\alpha)=4m^2-2m-1-\alpha(1)+\alpha(2m),
	\]
	as claimed.
\end{proof}

\begin{corollary}
	\label{cor:even-max}
	Suppose that $n=2m$. Then a half-separated permutation $\alpha\in \mathcal S_{2m}$ maximizes $w^\times$ if and only if
	\[
	\alpha(1)=m+1,\qquad \alpha(2m)=m,
	\]
	and
	\[
	\alpha(\{2,\dots,m\})=\{m+2,\dots,2m\},
	\qquad
	\alpha(\{m+1,\dots,2m-1\})=\{1,\dots,m-1\}.
	\]
	In that case,
	\[
	w^\times(\alpha)=4m^2-2m-2=(2m)^2-2m-2=n^2-n-2,
	\]
	and
	\[
	|\mathcal M_{2m}|=((m-1)!)^2.
	\]
\end{corollary}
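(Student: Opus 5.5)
The argument will rest on the linear formula of Proposition~\ref{prop:even-linear}: every half-separated $\alpha\in\mathcal S_{2m}$ satisfies
\[
w^\times(\alpha)=4m^2-2m-1-\alpha(1)+\alpha(2m).
\]
Among such permutations we maximize the boundary term $\alpha(2m)-\alpha(1)$. Half-separation gives $\alpha(1)\in\{m+1,\dots,2m\}$, so $\alpha(1)\ge m+1$. It also gives $\alpha(2m)\in\{1,\dots,m\}$, so $\alpha(2m)\le m$. Hence
\[
-\alpha(1)+\alpha(2m)\le -(m+1)+m=-1,
\]
and therefore $w^\times(\alpha)\le 4m^2-2m-2$. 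Equality holds exactly when $\alpha(1)=m+1$ and $\alpha(2m)=m$. In that case the remaining values are forced by half-separation: $\alpha$ maps $\{2,\dots,m\}$ onto $\{m+2,\dots,2m\}$ and $\{m+1,\dots,2m-1\}$ onto $\{1,\dots,m-1\}$. Conversely, any permutation with these four properties is half-separated and attains $4m^2-2m-2=n^2-n-2$.

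Next we show that this value is the global maximum over $\mathcal S_{2m}$ and not only over the half-separated class. Since $\mathcal S_{2m}$ is finite, $\mathcal M_{2m}$ is nonempty. By Corollary~\ref{cor:half-separation}(1), every element of $\mathcal M_{2m}$ is half-separated. So the global maximum equals the maximum over half-separated permutations, which is $4m^2-2m-2$. This proves that a half-separated permutation is in $\mathcal M_{2m}$ if and only if it satisfies the stated conditions, and that $\mathcal M_{2m}$ is exactly the set of permutations described.

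For the count, the permutations are determined by a bijection $\{2,\dots,m\}\to\{m+2,\dots,2m\}$ and an independent bijection $\{m+1,\dots,2m-1\}\to\{1,\dots,m-1\}$. Each of these sets has $m-1$ elements, so $|\mathcal M_{2m}|=((m-1)!)^2$.

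The main obstacle is that the argument depends on Corollary~\ref{cor:half-separation}, which in turn depends on the strictness claim in Lemma~\ref{lem:exchange}. That claim is only sketched in the paper. I would check that it genuinely yields a strict increase when $p\le m<q$ and $\alpha(p)\le m<\alpha(q)$. A direct check would compare $F_p(b)+F_q(a)$ with $F_p(a)+F_q(b)$, using that $F_p$ has slope $2$ on the relevant range and $F_q$ has slope $-2$, up to boundary adjustments at $j=1,2m$ where the slope is $1$. The difference is then at least $b-a\ge1>0$, with the sole delicate case $p=1$, $q=2m$ checked by hand.

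As a sanity check, $n=4$ ($m=2$) should give the value $10$ with $((1)!)^2=1$ maximizer, namely $3412$. Indeed $w^\times(3412)=|1-4|+|2-3|+|2-1|+|3-4|+|3-2|+|4-1|=10$.
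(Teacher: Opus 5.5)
Your deduction is the paper's own proof: Proposition~\ref{prop:even-linear}, then optimising $\alpha(1)$ and $\alpha(2m)$, then Corollary~\ref{cor:half-separation} to pass to all of $\mathcal S_{2m}$. You make the last step explicit, which the paper leaves implicit, and as a deduction it is fine for $m\ge2$. For $m=1$ the global reading fails, since $w^\times(12)=2>0=w^\times(21)$.

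The gap is in the step you yourself flag as load-bearing. The check you propose for the strictness in Lemma~\ref{lem:exchange} cannot succeed, because the claim is false. Your slope count assumes $F_p$ rises at rate $2$ and $F_q$ falls at rate $2$ across all of $[a,b]$. But for interior $j$ one has $F_j(x)=2\max(1,|x-j|)$. So $F_p$ falls for $x<p-1$ and is flat on $[p-1,p+1]$, $F_q$ rises for $x>q+1$, and $F_1$ has slope only $1$.

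Concretely, write $\Delta=F_p(b)+F_q(a)-F_p(a)-F_q(b)$.
\begin{itemize}
\item Taking $p=a=m$ and $q=b=m+1$ gives $\Delta=2+2-2-2=0$. For example, $w^\times(1234)=w^\times(1324)=6$.
\item Taking $p=1$, $q=m+1$, $a=m$, $b=2m$ gives $\Delta=(2m-2)+2-(m-2)-(2m-2)=4-m$, which is negative for $m\ge5$.
\end{itemize}
In the second situation, let $\alpha$ be otherwise half-separated, e.g.\ $\alpha=(5,6,7,8,9,10,1,2,3,4)$ for $n=10$, where the swap takes $w^\times$ from $84$ to $83$. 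Then $(1,m+1)$ is the only violating pair. So the exchange argument behind Corollary~\ref{cor:half-separation}, and your proposed repair of it, produces no improving swap at all.

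Corollary~\ref{cor:half-separation} is nonetheless true for $m\ge2$, but it needs another argument. The cleanest route bypasses it with a dual (assignment-LP) certificate. Put $h=m+\tfrac12$ and define
\begin{itemize}
\item $u_j=2|j-h|+1$ for $2\le j\le 2m-1$, and $u_1=u_{2m}=m-\tfrac32$;
\item $v_x=2|x-h|-1$ for $x\notin\{m,m+1\}$, and $v_m=v_{m+1}=\tfrac12$.
\end{itemize}
The triangle inequality, together with a short boundary check for $j=1$ and $j=2m$, gives $F_j(x)\le u_j+v_x$ for all $j,x$. Equality holds exactly at $(1,m+1)$, at $(2m,m)$, and at interior pairs $(j,x)$ with $x\notin\{m,m+1\}$ on the opposite side of $h$ from $j$.

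Summing along $x=\alpha(j)$ yields $w^\times(\alpha)\le\sum_j u_j+\sum_x v_x=(2m^2-3)+(2m^2-2m+1)=4m^2-2m-2$ for every $\alpha\in\mathcal S_{2m}$. Equality holds if and only if $\alpha$ has the four properties in the statement. This gives the global maximum, the description of $\mathcal M_{2m}$, and the count $((m-1)!)^2$ at once, with no appeal to Lemma~\ref{lem:exchange}.
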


\begin{proof}
	By Proposition~\ref{prop:even-linear},
	\[
	w^\times(\alpha)=4m^2-2m-1-\alpha(1)+\alpha(2m).
	\]
	Under the half-separation condition from Corollary~\ref{cor:half-separation}, we have
	\[
	\alpha(1)\in\{m+1,\dots,2m\},\qquad \alpha(2m)\in\{1,\dots,m\}.
	\]
	Therefore the right-hand side is maximal exactly when $\alpha(1)$ is as small as possible and $\alpha(2m)$ is as large as possible, namely
	\[
	\alpha(1)=m+1,\qquad \alpha(2m)=m.
	\]
	Substituting these values yields
	\[
	w^\times(\alpha)=4m^2-2m-2=n^2-n-2.
	\]
	Once the boundary values are fixed, the remaining $m-1$ large values may be arranged arbitrarily in the positions $2,\dots,m$, and the remaining $m-1$ small values may be arranged arbitrarily in the positions $m+1,\dots,2m-1$. Hence
	\[
	|\mathcal M_{2m}|=(m-1)!(m-1)!=((m-1)!)^2.
	\]
\end{proof}

\begin{proposition}
	\label{prop:odd-linear}
	Assume that $n=2m+1$ and that $\alpha\in \mathcal S_{2m+1}$ satisfies
	\[
	\alpha(\{1,\dots,m\})=\{m+2,\dots,2m+1\},
	\quad
	\alpha(m+1)=m+1,
	\quad
	\alpha(\{m+2,\dots,2m+1\})=\{1,\dots,m\}.
	\]
	Then
	\[
	w^\times(\alpha)=4m^2+2m-\alpha(1)+\alpha(2m+1).
	\]
\end{proposition}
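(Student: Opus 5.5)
We mirror the proof of Proposition~\ref{prop:even-linear}, working with the positional form \eqref{eq:reindexed}, i.e.\ $w^\times(\alpha)=\sum_{j=1}^{n}F_j(\alpha(j))$ with $n=2m+1$. The hypothesis fixes the sign of every absolute value, so the cross work becomes a linear expression in the values $\alpha(j)$. We then collapse the block sums using the known value sets. We should assume $m\ge1$, so that $n\ge3$ and positions $1$, $m+1$, $2m+1$ make sense.

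\textbf{Sign determination.} For $2\le j\le m$ we have $\alpha(j)\ge m+2\ge j+2$, so
\[
F_j(\alpha(j))=2\alpha(j)-2j .
\]
For $m+2\le j\le 2m$ we have $\alpha(j)\le m\le j-2$, so
\[
F_j(\alpha(j))=2j-2\alpha(j).
\]
At the middle position $j=m+1$, the fixed value $m+1$ lies between $m$ and $m+2$, so $F_{m+1}(m+1)=2$. The boundary terms are
\[
F_1(\alpha(1))=\alpha(1)-2, \qquad F_{2m+1}(\alpha(2m+1))=2m-\alpha(2m+1).
\]
The first holds since $\alpha(1)\ge m+2\ge 3$. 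The second holds since $\alpha(2m+1)\le m\le 2m$.

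\textbf{Collapsing the sums.} Write
\[
\sum_{j=2}^{m}\alpha(j)=\sum_{r=m+2}^{2m+1}r-\alpha(1), \qquad \sum_{j=m+2}^{2m}\alpha(j)=\sum_{r=1}^{m}r-\alpha(2m+1).
\]
Substituting, the boundary contributions combine as $\alpha(1)-2\alpha(1)=-\alpha(1)$ and $-\alpha(2m+1)+2\alpha(2m+1)=+\alpha(2m+1)$. This produces exactly the linear tail $-\alpha(1)+\alpha(2m+1)$ in the claimed formula.

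\textbf{Constant term.} What remains is to evaluate
\[
-2+2\Bigl(\tfrac{m(3m+3)}{2}\Bigr)-2\Bigl(\tfrac{m(m+1)}{2}-1\Bigr)+2\Bigl(\textstyle\sum_{j=m+2}^{2m}j\Bigr)-2\cdot\tfrac{m(m+1)}{2}+2+2m .
\]
This should simplify to $4m^2+2m$.

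\textbf{Main obstacle.} The only delicate step is this constant-term bookkeeping, where off-by-one errors in the ranges $2..m$ and $m+2..2m$ are easy to make. I would double-check it against small cases. For $m=1$, $\alpha=321$ gives $w^\times=4$, and the formula gives $4+2-3+1=4$, as required. A check at $m=2$, e.g.\ $\alpha=54321$, is also advisable.
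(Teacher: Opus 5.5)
Your proposal is correct and follows the same route as the paper: use the block hypotheses to fix the sign of every absolute value in the positional form \eqref{eq:reindexed}, then collapse the block sums using the known value sets. You are in fact more explicit than the paper about the middle term $F_{m+1}(m+1)=2$ and the two boundary terms, and the constant you left as ``should simplify'' does check out: with $\sum_{j=m+2}^{2m}j=(m-1)(3m+2)/2$ it equals $-2+m(3m+3)-m(m+1)+2+(m-1)(3m+2)-m(m+1)+2+2m=4m^2+2m$, so write this line out explicitly (it also covers $m=1$, where the block sums are empty).
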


\begin{proof}
	By Corollary~\ref{cor:half-separation}, the middle value is fixed:
	\[
	\alpha(m+1)=m+1.
	\]
	Exactly as in the proof of Proposition~\ref{prop:even-linear}, the separated outer blocks force fixed signs in the interior terms of \eqref{eq:reindexed}, and hence all such terms simplify linearly. Summing over the left and right outer blocks and using
	\[
	\sum_{j=1}^{m}\alpha(j)=\sum_{r=m+2}^{2m+1}r,
	\qquad
	\sum_{j=m+2}^{2m+1}\alpha(j)=\sum_{r=1}^{m}r,
	\]
	one obtains
	\[
	w^\times(\alpha)=4m^2+2m-\alpha(1)+\alpha(2m+1).
	\]
\end{proof}

\begin{corollary}
	\label{cor:odd-max}
	Suppose that $n=2m+1$. Then a permutation $\alpha\in \mathcal S_{2m+1}$ maximizes $w^\times$ if and only if
	\[
	\alpha(1)=m+2,\qquad \alpha(m+1)=m+1,\qquad \alpha(2m+1)=m,
	\]
	and
	\[
	\alpha(\{2,\dots,m\})=\{m+3,\dots,2m+1\},
	\qquad
	\alpha(\{m+2,\dots,2m\})=\{1,\dots,m-1\}.
	\]
	In that case,
	\[
	w^\times(\alpha)=4m^2+2m-2=(2m+1)^2-(2m+1)-2=n^2-n-2,
	\]
	and
	\[
	|\mathcal M_{2m+1}|=((m-1)!)^2.
	\]
\end{corollary}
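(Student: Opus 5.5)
The argument runs parallel to Corollary~\ref{cor:even-max}: reduce to half-separated permutations, then maximize the linear formula of Proposition~\ref{prop:odd-linear}.

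\textbf{Reduction to half-separated permutations.} Let $\alpha\in\mathcal M_{2m+1}$. By Corollary~\ref{cor:half-separation}(2), $\alpha$ maps $\{1,\dots,m\}$ onto $\{m+2,\dots,2m+1\}$, maps $\{m+2,\dots,2m+1\}$ onto $\{1,\dots,m\}$, and fixes $m+1$. Hence Proposition~\ref{prop:odd-linear} applies and gives
\[
w^\times(\alpha)=4m^2+2m-\alpha(1)+\alpha(2m+1).
\]
For every half-separated $\alpha$ we have $\alpha(1)\ge m+2$ and $\alpha(2m+1)\le m$. So on this class
\[
w^\times\le 4m^2+2m-(m+2)+m=4m^2+2m-2,
\]
with equality exactly when $\alpha(1)=m+2$ and $\alpha(2m+1)=m$.

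\textbf{Existence and identification of the maximum.} A half-separated permutation with $\alpha(1)=m+2$ and $\alpha(2m+1)=m$ exists; for example, place $m+3,\dots,2m+1$ in positions $2,\dots,m$ and $1,\dots,m-1$ in positions $m+2,\dots,2m$ in any order. Since every maximizer is half-separated, the global maximum over $\mathcal S_{2m+1}$ equals the maximum over half-separated permutations, namely $4m^2+2m-2$. A direct check gives $4m^2+2m-2=(2m+1)^2-(2m+1)-2=n^2-n-2$.

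\textbf{Characterization and count.} The maximizers are therefore exactly the half-separated permutations with $\alpha(1)=m+2$, $\alpha(m+1)=m+1$ and $\alpha(2m+1)=m$. Forward direction: a maximizer is half-separated by Corollary~\ref{cor:half-separation} and attains the bound, so its boundary values are forced. Converse: any such permutation is half-separated and attains the value $4m^2+2m-2$. Once these three values are fixed, half-separation forces
\[
\alpha(\{2,\dots,m\})=\{m+3,\dots,2m+1\},\qquad \alpha(\{m+2,\dots,2m\})=\{1,\dots,m-1\},
\]
and each block can be arranged independently. This gives $|\mathcal M_{2m+1}|=((m-1)!)^2$.

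\textbf{Main obstacle.} The weak point is the strictness clause of Lemma~\ref{lem:exchange}, which Corollary~\ref{cor:half-separation} relies on. For small $m$ the exchange may only be weakly increasing, for instance at boundary positions where $F_1$ and $F_n$ are one-sided. I would not rely on that clause. Instead I would verify strictness directly: take $p\le m$ with $\alpha(p)=a\le m+1$, and $q\ge m+1$ with $\alpha(q)=b\ge m+1$, $a<b$, and compute $F_p(b)-F_p(a)$ and $F_q(a)-F_q(b)$ from the explicit slopes. The case $a=m+1$ or $b=m+1$ involving the middle value needs separate care. I would also handle $m=1$ by hand: for $n=3$ the maximum is $4$, attained by $123$ and $321$ (Table~\ref{tab:crosswork}), so $|\mathcal M_3|=2\neq((m-1)!)^2=1$. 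The statement therefore needs $m\ge2$, and the proof should note that restriction.
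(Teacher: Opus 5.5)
You follow the paper's proof exactly: half-separation, then Proposition~\ref{prop:odd-linear}, then optimizing the boundary values. The step you flag is precisely where that argument fails, in the paper as well, but the repair you propose cannot work.

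The defect in Lemma~\ref{lem:exchange} is not a small-$m$ loss of strictness. The exchange inequality itself is false, because $F_1$ and $F_n$ have slope $1$ while the interior $F_q$ have slope $2$, so the Monge property fails. The statement you are proving already refutes it. For $m\ge2$ a claimed maximizer has $\alpha(1)=m+2<\alpha(2)$. Exchanging these two values gives a half-separated permutation with larger $\alpha(1)$ and the same $\alpha(2m+1)$, hence strictly smaller $w^\times$ by Proposition~\ref{prop:odd-linear}; for instance $w^\times(45312)=18$ but $w^\times(54312)=17$.

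More to the point, the bracketing exchanges you intend to check slope by slope can strictly \emph{decrease} $w^\times$ once $m$ is large. For $p=1$, $\alpha(1)=m$, $q=m+1$, $\alpha(m+1)=2m+1$, the change is
\[
\bigl(F_1(2m+1)-F_1(m)\bigr)+\bigl(F_{m+1}(m)-F_{m+1}(2m+1)\bigr)=(m+1)+(2-2m)=3-m,
\]
which is negative for $m\ge4$. So no pairwise check of this kind yields Corollary~\ref{cor:half-separation}. Without it, your bound $4m^2+2m-2$ is proved only on the half-separated class, and the global maximum, the ``only if'' direction and the count all remain unproved.

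What is needed is a global bound. Use the identities $F_j(x)=2|x-j|+2[x=j]$ for $2\le j\le n-1$, $F_1(x)=|x-1|-1+2[x=1]$ and $F_n(x)=|x-n|-1+2[x=n]$, where $[\cdot]$ is the indicator. Combine them with $\sum_j|\alpha(j)-j|=2\sum_k c_k$, where $c_k=\#\{i\le k:\alpha(i)>k\}$. This gives
\[
w^\times(\alpha)=\sum_{k=1}^{n-1}\bigl(4c_k-[\alpha(1)>k]-[\alpha(n)\le k]\bigr)-2+2f,
\]
where $f$ is the number of fixed points of $\alpha$.

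Since $c_k\le\min(k,n-k)$, and equality forces $\alpha(1)>k$ for $k\le m$ and $\alpha(n)\le k$ for $k\ge m+1$, the $k$-th summand is at most $4\min(k,n-k)-1$. These bounds add up to $4m^2+2m$.

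It remains to check that the total deficit at the middle cuts $k=m,m+1$ is at least $2f$:
\begin{itemize}
\item $t\ge1$ fixed points in $\{1,\dots,m\}$ force $c_m\le m-t$, a deficit of at least $4t-1$ at cut $m$; the right block is symmetric.
\item If $m+1$ is fixed, then $c_m=c_{m+1}$ and $\alpha(1),\alpha(n)\neq m+1$, which costs at least $1$ at each middle cut.
\end{itemize}
Tracking the equality cases gives $w^\times\le4m^2+2m-2$, with equality exactly on the stated set when $m\ge2$. For $m=1$ the identity also attains equality, which confirms your correct observation that the statement needs $m\ge2$.
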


\begin{proof}
	Once Corollary~\ref{cor:half-separation} has reduced the problem to the half-separated class and fixed the middle point, Proposition~\ref{prop:odd-linear} shows that
	\[
	w^\times(\alpha)=4m^2+2m-\alpha(1)+\alpha(2m+1).
	\]
	Thus $w^\times(\alpha)$ is maximal exactly when $\alpha(1)$ is minimized and $\alpha(2m+1)$ is maximized subject to the half-separation constraint. Hence
	\[
	\alpha(1)=m+2,\qquad \alpha(2m+1)=m.
	\]
	Substituting gives
	\[
	w^\times(\alpha)=4m^2+2m-2=n^2-n-2.
	\]
	The remaining $m-1$ values in each outer block may be permuted independently, and therefore
	\[
	|\mathcal M_{2m+1}|=((m-1)!)^2.
	\]
\end{proof}

We may now combine Corollaries~\ref{cor:half-separation}, \ref{cor:even-max}, and \ref{cor:odd-max} to obtain the main extremal result of this section.

\begin{theorem}
	\label{thm:max-count}
	Let
	\[
	\mathcal M_n=\{\alpha\in \mathcal S_n:\; w^\times(\alpha)=n^2-n-2\}.
	\]
	Then for every integer $n\ge 3$,
	\[
	\max_{\alpha\in\mathcal S_n} w^\times(\alpha)=n^2-n-2.
	\]
	
	More precisely, the maximizers are exactly the following.
	
	\begin{enumerate}
		\item If $n=2m$, then
		\[
		\mathcal M_{2m}
		=
		\Bigl\{
		\alpha\in\mathcal S_{2m}:
		\alpha(1)=m+1,\ \alpha(2m)=m,\ 
		\alpha(\{2,\dots,m\})=\{m+2,\dots,2m\},
		\]
		\[
		\hspace{4.3cm}
		\alpha(\{m+1,\dots,2m-1\})=\{1,\dots,m-1\}
		\Bigr\},
		\]
		and hence
		\[
		|\mathcal M_{2m}|=((m-1)!)^2.
		\]
		
		\item If $n=2m+1$, then
		\[
		\mathcal M_{2m+1}
		=
		\Bigl\{
		\alpha\in\mathcal S_{2m+1}:
		\alpha(1)=m+2,\ \alpha(m+1)=m+1,\ \alpha(2m+1)=m,
		\]
		\[
		\hspace{2.4cm}
		\alpha(\{2,\dots,m\})=\{m+3,\dots,2m+1\},\
		\alpha(\{m+2,\dots,2m\})=\{1,\dots,m-1\}
		\Bigr\},
		\]
		and hence
		\[
		|\mathcal M_{2m+1}|=((m-1)!)^2.
		\]
	\end{enumerate}
\end{theorem}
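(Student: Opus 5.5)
The statement is the combination of Corollaries~\ref{cor:half-separation}, \ref{cor:even-max} and \ref{cor:odd-max}. The intended route is to invoke Corollary~\ref{cor:half-separation} to restrict any maximizer to the half-separated class. Then Propositions~\ref{prop:even-linear} and \ref{prop:odd-linear} make $w^\times$ linear there, with value depending only on $-\alpha(1)+\alpha(n)$. Optimizing these two boundary values gives $n^2-n-2$, and counting the free arrangements of the remaining $m-1$ large and $m-1$ small values gives $((m-1)!)^2$.

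I do not want to lean on the strictness clause of Lemma~\ref{lem:exchange}, since it is only sketched. Indeed it must fail for $n=3$: Table~\ref{tab:crosswork} gives $w^\times(123)=4=n^2-n-2$, yet $123$ is not half-separated. So I will first check small cases directly. For $n=3$ the maximizers are $\{123,321\}$, so the description in part~(2) needs $n\ge 4$ or the extra element $123$. I will then prove the upper bound by a route independent of the exchange lemma.

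\emph{Key identity.} For $2\le j\le n-1$ one checks $F_j(x)=2|x-j|+2[x=j]$. At the boundary, $F_1(x)=|x-1|-1+2[x=1]$ and $F_n(x)=|x-n|-1+2[x=n]$. Writing $D(\alpha)=\sum_{j}|\alpha(j)-j|$ for the total displacement and $f(\alpha)$ for the number of fixed points, this gives
\[
w^\times(\alpha)=2D(\alpha)-|\alpha(1)-1|-|\alpha(n)-n|-2+2f(\alpha).
\]

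\emph{Upper bound.} I use the classical fact that $D(\alpha)\le\lfloor n^2/2\rfloor$, with equality exactly for the half-separated permutations: for $n=2m+1$ this includes the middle point being fixed, since otherwise the value $m+1$ sits in an outer block, or an outer block's value sits at position $m+1$, and some displacement is lost. I also use that $D$ is always even. On the half-separated class the identity immediately reproduces Propositions~\ref{prop:even-linear} and \ref{prop:odd-linear}. For example, when $n=2m$ we have $f=0$, $|\alpha(1)-1|\ge m$ and $|\alpha(n)-n|\ge m$, so $w^\times\le 4m^2-2m-2$, with equality iff $\alpha(1)=m+1$ and $\alpha(2m)=m$. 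The odd case is analogous, with $f=1$ coming from the middle fixed point.

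\emph{Main obstacle: non-half-separated permutations.} Here $D\le\lfloor n^2/2\rfloor-2$, so $2D$ drops by at least $4$. The danger is the fixed-point bonus $2f$ together with possibly small boundary terms $|\alpha(1)-1|$ and $|\alpha(n)-n|$. I would try to show a displacement-loss estimate: every fixed point, and every unit by which $|\alpha(1)-1|$ or $|\alpha(n)-n|$ falls below $m$, forces a compensating loss in $D$. The idea is that a fixed point $j$ in an outer block, or $\alpha(1)$ close to $1$, means some value stays on its own side, and the matching-based proof of $D\le\lfloor n^2/2\rfloor$ then loses at least $2\min(j,n+1-j)$. Verifying that these losses dominate, including when several defects interact, is the step I expect to be delicate. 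It is also exactly where $n=3$ escapes: the identity has $D=0$ but gains $2f=6$. So the estimate should be proved for $n\ge4$, with $n=4,5$ checked by hand if needed. The enumeration $((m-1)!)^2$ then follows as in Corollaries~\ref{cor:even-max} and \ref{cor:odd-max}.
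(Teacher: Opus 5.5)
Your reading of the statement is right. At $n=3$ the identity $123$ also attains $n^2-n-2=4$, so part~(2) and the count $((m-1)!)^2=1$ fail there; the paper's own Remark~\ref{rem:max-count} lists $\mathcal M_3=\{123,321\}$. Your distrust of the paper's route is also justified. For $n=2m\ge4$ with $\alpha(m)=m$, $\alpha(m+1)=m+1$, the swap invoked in Corollary~\ref{cor:half-separation} does not increase $w^\times$ at all, since $F_m(m)+F_{m+1}(m+1)=4=F_m(m+1)+F_{m+1}(m)$. Your identity $w^\times(\alpha)=2D(\alpha)-|\alpha(1)-1|-|\alpha(n)-n|-2+2f(\alpha)$ and your analysis of the half-separated class are correct. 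The gap is that the proposal stops exactly where the content of the theorem lies. The strict inequality $w^\times(\alpha)<n^2-n-2$ for non-half-separated $\alpha$ ($n\ge4$) is only announced, so neither the maximum nor the description of $\mathcal M_n$ is established.

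Moreover, both facts you plan to use there are false as stated.

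\emph{First}, for $n=2m+1$ the equality $D(\alpha)=2m^2+2m$ does not force $\alpha(m+1)=m+1$. It holds iff $\alpha(\{1,\dots,m\})\subseteq\{m+1,\dots,2m+1\}$ and $\alpha(\{m+2,\dots,2m+1\})\subseteq\{1,\dots,m+1\}$, e.g.\ $312$, $231$ or $35412$. So ``non-half-separated implies $D\le\lfloor n^2/2\rfloor-2$'' fails for odd $n$. These permutations need their own argument: $f=0$, and $m+1$ cannot be both $\alpha(1)$ and $\alpha(n)$, whence $w^\times\le n^2-n-3$.

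\emph{Second}, a fixed point $j$ does not cost $2\min(j,n+1-j)$ in $D$. For $n=2m\ge4$, a permutation that is half-separated except for $\alpha(m)=m$, $\alpha(m+1)=m+1$ (e.g.\ $4231$) has $D=2m^2-2$ and $f=2$, so $2D+2f$ equals its half-separated value. Hence no estimate of the form ``every defect loses more displacement than it gains'' can hold. Strictness has to come from the boundary terms: here $\alpha(1)\ne m+1$ and $\alpha(n)\ne m$ are forced, giving $w^\times\le n^2-n-4$.

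One way to organise the bookkeeping is as follows. Put $c=(n+1)/2$. Use $|x-j|=|x-c|+|j-c|-2\min(|x-c|,|j-c|)$ when $x,j$ lie strictly on the same side of $c$, and $|x-j|=|x-c|+|j-c|$ otherwise. This gives $2D+2f=2\lfloor n^2/2\rfloor+2\epsilon-\sum_{j}\bigl(4\min(|\alpha(j)-c|,|j-c|)-2[\alpha(j)=j]\bigr)$. Here the sum runs over positions $j$ with $j,\alpha(j)$ strictly on the same side of $c$, and $\epsilon=1$ iff $n$ is odd and $\alpha(c)=c$. Each summand is $\ge0$ and vanishes only for fixed points $j\in\{m,m+1\}$ when $n=2m$. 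It then remains to check, for $n\ge4$, three things:
\begin{enumerate}
\item a same-side position at $1$ or $n$ loses more than it gains in the boundary term;
\item for odd $n$, the loss of the bonus $2\epsilon$ is offset by at most $1$ (by placing $m+1$ at position $1$ or $n$);
\item if the only same-side positions are the fixed points $m,m+1$, then the optimal boundary values $m+1$ and $m$ are blocked.
\end{enumerate}
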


\begin{proof}
	The lower bound
	\[
	\max_{\alpha\in\mathcal S_n} w^\times(\alpha)\ge n^2-n-2
	\]
	is realized by the explicit block-swap permutations described in Corollaries~\ref{cor:even-max} and \ref{cor:odd-max}. Conversely, Corollary~\ref{cor:half-separation} shows that every maximizer must be completely half-separated, with the middle point fixed in the odd case. Propositions~\ref{prop:even-linear} and \ref{prop:odd-linear} then reduce the problem to a linear optimization in the boundary values, while Corollaries~\ref{cor:even-max} and \ref{cor:odd-max} identify exactly when that linear expression is maximal. In both parity cases, the maximal value is
	\[
	n^2-n-2.
	\]
	The descriptions of $\mathcal M_{2m}$ and $\mathcal M_{2m+1}$, together with their cardinalities, follow immediately from the two corollaries.
\end{proof}

\begin{remark}\label{rem:max-count}
	For $n\ge 4$, Theorem~\ref{thm:max-count} yields the uniform counting formula
	\[
	|\mathcal M_n|
	=
	\left(\left\lfloor\frac{n}{2}\right\rfloor-1\right)!
	\left(\left\lfloor\frac{n}{2}\right\rfloor-1\right)!.
	\]
	Since the two factorials coincide in both parity cases, this may be written more compactly as
	\[
	|\mathcal M_n|=((\lfloor n/2\rfloor-1)!)^2.
	\]
	For the exceptional case $n=3$, one has \(\mathcal M_3=\{123,321\}\)
	with \(|\mathcal M_3|=2\).
\end{remark}

\medskip

\begin{theorem}\label{thm:Sn}
For every integer $n \ge 2$, the total cross work on the symmetric group $\mathcal{S}_n$ is
\[
w^\times(\mathcal{S}_n) = 2(n-1)! \left[ \binom{n+1}{3} + \binom{n}{3} \right].
\]
\end{theorem}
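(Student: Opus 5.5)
The plan is to compute the total by switching the order of summation and using the fact that, as $\alpha$ ranges over $\mathcal S_n$, each single image $\alpha(j)$ is equidistributed. Concretely, starting from Definition~\ref{def:total_crosswork} and the expression \eqref{eq:total_cross_work_Sn},
\[
w^\times(\mathcal S_n)=\sum_{i=1}^{n-1}\sum_{\alpha\in\mathcal S_n}|i-\alpha(i+1)|+\sum_{i=1}^{n-1}\sum_{\alpha\in\mathcal S_n}|(i+1)-\alpha(i)|.
\]
For a fixed position $j$ and a fixed value $x\in[n]$, there are exactly $(n-1)!$ permutations with $\alpha(j)=x$. Hence each inner sum collapses to $(n-1)!\,D(k)$, where
\[
D(k):=\sum_{x=1}^{n}|k-x|, \qquad k\in[n].
\]
The first inner sum gives $k=i$ and the second gives $k=i+1$. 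This yields
\[
w^\times(\mathcal S_n)=(n-1)!\Bigl(\sum_{i=1}^{n-1}D(i)+\sum_{k=2}^{n}D(k)\Bigr)=(n-1)!\bigl(2S-D(1)-D(n)\bigr),
\qquad S:=\sum_{k=1}^nD(k).
\]

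Next I evaluate the ingredients. Splitting $D(k)$ at $x=k$ gives
\[
D(k)=\binom{k}{2}+\binom{n-k+1}{2},
\]
so $D(1)=D(n)=\binom{n}{2}$. By the hockey-stick identity,
\[
S=2\sum_{k=1}^n\binom{k}{2}=2\binom{n+1}{3}.
\]
Equivalently, $S$ counts ordered pairs $(k,x)$ weighted by $|k-x|$. Therefore
\[
w^\times(\mathcal S_n)=(n-1)!\Bigl(4\binom{n+1}{3}-2\binom{n}{2}\Bigr).
\]

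Finally, Pascal's rule gives $\binom{n+1}{3}-\binom{n}{3}=\binom{n}{2}$. Substituting this rewrites $4\binom{n+1}{3}-2\binom{n}{2}$ as $2\binom{n+1}{3}+2\binom{n}{3}$, which is the claimed formula. I will sanity-check the result against $w^\times(\mathcal S_3)=20$ from Table~\ref{tab:crosswork}: $2\cdot2\cdot(4+1)=20$. For $n=2$ the formula gives $2(1+0)=2$, which matches $w^\times(12)=w^\times(21)=1$.

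There is no real obstacle in this argument. The only points needing care are the boundary bookkeeping, namely that the left distances use the values $k=1,\dots,n-1$ while the right distances use $k=2,\dots,n$, and the closed form for $\sum_k D(k)$. Both are routine.
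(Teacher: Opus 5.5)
Your proof is correct and is essentially the paper's argument. Both interchange the sums, use that each $\alpha(j)$ takes every value in $[n]$ exactly $(n-1)!$ times, evaluate $\sum_{x}|k-x|=\binom{k}{2}+\binom{n-k+1}{2}$, and close with the hockey-stick identity. Your only deviation is to sum $D$ over all of $[n]$, subtract $D(1)=D(n)=\binom{n}{2}$, and then apply Pascal's rule. This has the small advantage of making explicit the paper's ``by a similar argument'' for the right-distance sum.

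There is one incidental slip in your $n=2$ check: in fact $w^\times(12)=2$ and $w^\times(21)=0$, not $1$ each, though the total $2$ is right.
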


\begin{proof}
By definition, we have
\begin{align*}
w^{\times}(\mathcal{S}_n) &= \sum_{\alpha \in \mathcal{S}_n} w^{\times}(\alpha) \\
&= \sum_{\alpha \in \mathcal{S}_n} \sum_{i=1}^{n-1}(|i-\alpha(i+1)| + |i+1 - \alpha(i)|) \\
&= \sum_{\alpha \in \mathcal{S}_n} \sum_{i=1}^{n-1}|i-\alpha(i+1)| + \sum_{\alpha \in \mathcal{S}_n} \sum_{i=1}^{n-1}|i+1 - \alpha(i)|.
\end{align*}
First, we compute \(\displaystyle \sum_{\alpha \in \mathcal{S}_n} \sum_{i=1}^{n-1}|i-\alpha(i+1)| \). By a discrete Fubini argument, we interchange the order of the sums to get
\[\displaystyle \sum_{\alpha \in \mathcal{S}_n} \sum_{i=1}^{n-1}(|i-\alpha(i+1)|) = \sum_{i=1}^{n-1}\sum_{\alpha \in \mathcal{S}_n}(|i-\alpha(i+1)|). \] 
For fixed $i$, the value $\alpha(i+1)$ ranges over all $j\in[n]$, and for each $j$ there are $(n-1)!$ permutations in \(\mathcal{S}_n\) with $\alpha(i+1)=j$. Thus,
\begin{align*}
\sum_{i=1}^{n-1}\sum_{\alpha \in \mathcal{S}_n} |i-\alpha(i+1)| &= \sum_{i=1}^{n-1} (n-1)! \sum_{j=1}^{n}|i-j| \\
&= (n-1)! \sum_{i=1}^{n-1} \sum_{j=1}^{n}|i-j| \\
&= (n-1)! \sum_{i=1}^{n-1} \left( \sum_{j=1}^{i}(i-j) + \sum_{j=i+1}^{n} (j-i) \right) \\
&= (n-1)! \sum_{i=1}^{n-1} \left( \frac{i(i-1)}{2} + \frac{(n-i)(n-i+1)}{2} \right) \\
&= (n-1)! \sum_{i=1}^{n-1} \left( \binom{i}{2} + \binom{n-i+1}{2} \right) \\
&= (n-1)! \left[ \sum_{i=1}^{n-1} \binom{i}{2} + \sum_{i=1}^{n-1}\binom{n-i+1}{2} \right].
\end{align*}
By the hockey-stick identity, \(\sum_{i=r}^{n}\binom{i}{r} = \binom{n+1}{r+1}\). Thus,
\[
\sum_{i=1}^{n-1}\sum_{\alpha \in \mathcal{S}_n} |i-\alpha(i+1)| = (n-1)! \left[ \binom{n+1}{3} + \binom{n}{3} \right].
\]
By a similar argument,
\[
\sum_{i=1}^{n-1}\sum_{\alpha \in \mathcal{S}_n} |i+1-\alpha(i)| = (n-1)! \left[ \binom{n+1}{3} + \binom{n}{3} \right].
\]
Adding these two expressions gives
\begin{align*}
w^{\times}(\mathcal{S}_n) &= 2(n-1)! \left[ \binom{n+1}{3} + \binom{n}{3} \right].
\end{align*}
\end{proof}

\begin{corollary}\label{cor:avg-crosswork}
The average cross work on $\mathcal{S}_n$ is
\[
\overline{w}^\times(\mathcal{S}_n) = \frac{2}{n} \left[ \binom{n+1}{3} + \binom{n}{3} \right].
\]
\end{corollary}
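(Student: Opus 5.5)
The plan is to deduce the corollary directly from Theorem~\ref{thm:Sn} together with Definition~\ref{def:total_crosswork}. By definition, the average cross work is the total cross work divided by the order of the semigroup, so
\[
\overline{w}^\times(\mathcal{S}_n)=\frac{w^\times(\mathcal{S}_n)}{|\mathcal{S}_n|}.
\]
We substitute $|\mathcal{S}_n|=n!$ and the closed form $w^\times(\mathcal{S}_n)=2(n-1)!\bigl[\binom{n+1}{3}+\binom{n}{3}\bigr]$ from Theorem~\ref{thm:Sn}. The only simplification is the cancellation $(n-1)!/n!=1/n$, which gives
\[
\overline{w}^\times(\mathcal{S}_n)=\frac{2}{n}\left[\binom{n+1}{3}+\binom{n}{3}\right],
\]
exactly as claimed. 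The formula holds for every $n\ge 2$, the same range as in Theorem~\ref{thm:Sn}.

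As a sanity check, I would evaluate both sides at $n=3$. The right-hand side is $\frac{2}{3}(4+1)=\frac{10}{3}$. From Table~\ref{tab:crosswork}, the total cross work is $20$, and dividing by $6$ gives $\frac{10}{3}$, so the two agree.

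Optionally, the bracket can be simplified to $\binom{n+1}{3}+\binom{n}{3}=\frac{n(n-1)(2n-1)}{6}$, which yields the more explicit form $\overline{w}^\times(\mathcal{S}_n)=\frac{(n-1)(2n-1)}{3}$. At $n=3$ this again gives $\frac{10}{3}$.

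There is no real obstacle here. The only point needing care is to use the correct group order $n!$, so that the factor $(n-1)!$ cancels cleanly.
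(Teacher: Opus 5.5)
Your proposal is correct and is the same argument as the paper's proof: divide the total from Theorem~\ref{thm:Sn} by $|\mathcal{S}_n|=n!$ and cancel $(n-1)!/n!=1/n$. Your $n=3$ check ($\tfrac{10}{3}$) and the simplified form $\tfrac{(n-1)(2n-1)}{3}$ are also correct.
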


\begin{proof}
Since $|\mathcal{S}_n| = n!$, by Definition~\ref{def:total_crosswork} we have
\[
\overline{w}^\times(\mathcal{S}_n) = \frac{w^\times(\mathcal{S}_n)}{n!} = \frac{2(n-1)!}{n!} \left[ \binom{n+1}{3} + \binom{n}{3} \right] = \frac{2}{n} \left[ \binom{n+1}{3} + \binom{n}{3} \right].
\]
\end{proof}

Next we extend these results to other subsemigroups of the partial transformation semigroup.

\begin{theorem}\label{thm:Tn}
For $n \ge 2$, the total cross work on the full transformation semigroup $\mathcal{T}_n$ is
\[
w^\times(\mathcal{T}_n) = 2 n^{\,n-1} \left[ \binom{n+1}{3} + \binom{n}{3} \right].
\]
\end{theorem}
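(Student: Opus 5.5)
The plan is to copy the proof of Theorem~\ref{thm:Sn}, changing only the count of maps with a prescribed value at a given point. Every $\alpha\in\mathcal T_n$ has $\operatorname{dom}(\alpha)=[n]$, so all distances in Definition~\ref{def:local_crosswork} are active. Summing over $\mathcal T_n$ and interchanging the finite sums gives
\[
w^\times(\mathcal T_n)=\sum_{i=1}^{n-1}\sum_{\alpha\in\mathcal T_n}|i-\alpha(i+1)|+\sum_{i=1}^{n-1}\sum_{\alpha\in\mathcal T_n}|(i+1)-\alpha(i)|.
\]

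The key step is the uniform distribution of a single value. For a fixed position $k$ and a fixed $j\in[n]$, the number of $\alpha\in\mathcal T_n$ with $\alpha(k)=j$ is exactly $n^{n-1}$, since the other $n-1$ points may be mapped arbitrarily. This plays the role that the factor $(n-1)!$ played for $\mathcal S_n$. Hence the first double sum equals $n^{n-1}\sum_{i=1}^{n-1}\sum_{j=1}^n|i-j|$. The second equals $n^{n-1}\sum_{i=1}^{n-1}\sum_{j=1}^n|(i+1)-j|$.

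The remaining inner sums are the same as those already evaluated in the proof of Theorem~\ref{thm:Sn}. There the first was shown, via the hockey-stick identity, to equal $\binom{n+1}{3}+\binom{n}{3}$. For the second, I will check the "similar argument" explicitly. Substitute $k=i+1$, which runs over $2,\dots,n$. For fixed $k$ the inner sum is $\binom{k}{2}+\binom{n-k+1}{2}$. Summing over $k=2,\dots,n$ gives $\binom{n+1}{3}+\sum_{k=2}^{n}\binom{n-k+1}{2}=\binom{n+1}{3}+\binom{n}{3}$. The $k=1$ term of the first family is $\binom{1}{2}=0$, so the sums agree. Equivalently, the reflection $j\mapsto n+1-j$, $i\mapsto n-i$ carries one sum to the other.

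Adding the two contributions gives $2n^{n-1}\bigl[\binom{n+1}{3}+\binom{n}{3}\bigr]$. I do not expect a real obstacle. The only point needing care is the second family of sums, which the earlier proof handled only by "a similar argument"; I will write out that reindexing check explicitly. As a sanity check for $n=2$: $\mathcal T_2$ has four maps with cross works $2,2,0,2$, summing to $6$, which matches $2\cdot 2\cdot(1+0)=4$? No: $\binom{3}{3}+\binom{2}{3}=1$ gives $4$, so I will recompute the small case directly rather than trust the formula.

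Recomputing: $w^\times(\alpha)=|1-\alpha(2)|+|2-\alpha(1)|$. For $(\alpha(1),\alpha(2))=(1,1),(1,2),(2,1),(2,2)$ this gives $1,2,0,1$, totalling $4$, which agrees with the formula.
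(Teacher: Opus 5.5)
Your proposal is correct and follows the paper's proof essentially verbatim: the paper likewise uses that exactly $n^{n-1}$ maps in $\mathcal T_n$ take a prescribed value at a fixed point, and reduces the sum to $\sum_{i=1}^{n-1}\sum_{j=1}^n|i-j|=\binom{n+1}{3}+\binom{n}{3}$. It then merely asserts that the $|i+1-\alpha(i)|$ terms contribute equally, which your reindexing $k=i+1$ (or the reflection $i\mapsto n-i$, $j\mapsto n+1-j$) checks explicitly; delete the garbled first attempt at the $n=2$ check, since your recomputed total of $4$ is right and agrees with the formula.
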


\begin{proof}
Fix $i \in \{1,\dots,n-1\}$. For each $j \in [n]$, there are exactly $n^{n-1}$ transformations $\alpha \in \mathcal{T}_n$ with $\alpha(i+1)=j$. Hence
\[
\sum_{\alpha \in \mathcal{T}_n} |i-\alpha(i+1)| = n^{n-1} \sum_{j=1}^n |i-j|.
\]
Summing over $i$ and using the same binomial identities as before gives
\[
\sum_{i=1}^{n-1} \sum_{\alpha \in \mathcal{T}_n} |i-\alpha(i+1)| = n^{n-1} \left[ \binom{n}{3} + \binom{n+1}{3} \right].
\]
The term $|i+1-\alpha(i)|$ contributes equally, yielding the stated formula.
\end{proof}

\begin{theorem}\label{thm:Sing}
For $n \ge 2$, the total cross work on the semigroup of singular transformations $\mathrm{Sing}_n$ is
\[
w^\times(\mathrm{Sing}_n) = 2\bigl(n^{\,n-1} - (n-1)!\bigr) \left[ \binom{n+1}{3} + \binom{n}{3} \right].
\]
\end{theorem}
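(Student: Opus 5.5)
Since $\mathrm{Sing}_n=\mathcal{T}_n\setminus\mathcal{S}_n$ is a disjoint union, and $w^\times$ is additive over a set of transformations by Definition~\ref{def:total_crosswork}, the plan is to subtract:
\[
w^\times(\mathrm{Sing}_n)=\sum_{\alpha\in\mathcal T_n}w^\times(\alpha)-\sum_{\alpha\in\mathcal S_n}w^\times(\alpha)=w^\times(\mathcal T_n)-w^\times(\mathcal S_n).
\]
Here $w^\times$ of a full transformation is computed by the same formula as in Definition~\ref{def:local_crosswork}, with every $i,i+1$ in the domain. So the total on $\mathrm{Sing}_n$ is a difference of two totals we already know.

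Next I would substitute Theorem~\ref{thm:Tn}, which gives $w^\times(\mathcal T_n)=2n^{n-1}\bigl[\binom{n+1}{3}+\binom{n}{3}\bigr]$, and Theorem~\ref{thm:Sn}, which gives $w^\times(\mathcal S_n)=2(n-1)!\bigl[\binom{n+1}{3}+\binom{n}{3}\bigr]$. The two expressions share the bracketed factor, so factoring it out immediately yields $2\bigl(n^{n-1}-(n-1)!\bigr)\bigl[\binom{n+1}{3}+\binom{n}{3}\bigr]$.

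As a cross-check, or as an alternative direct proof, I would count positionally. Fix a position $k$ and a value $j$. The number of $\alpha\in\mathrm{Sing}_n$ with $\alpha(k)=j$ equals the number of such maps in $\mathcal T_n$ minus the number of such permutations, namely $n^{n-1}-(n-1)!$. This count is independent of $k$ and $j$. Then the Fubini computation in the proof of Theorem~\ref{thm:Sn} goes through verbatim with $(n-1)!$ replaced by $n^{n-1}-(n-1)!$.

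The only step needing care is confirming that the binomial evaluation $\sum_{i=1}^{n-1}\sum_{j=1}^n|i-j|=\binom{n+1}{3}+\binom{n}{3}$ used in those earlier theorems is correct. Only the common factor matters for the subtraction, so this is not a real obstacle. There is no genuine difficulty: the result is a direct corollary of Theorems~\ref{thm:Tn} and~\ref{thm:Sn}.
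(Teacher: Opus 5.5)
Your proposal is correct and matches the paper's proof: write $\mathcal{T}_n=\mathcal{S}_n\sqcup\mathrm{Sing}_n$, subtract Theorem~\ref{thm:Sn} from Theorem~\ref{thm:Tn}, and factor out the common bracket $\binom{n+1}{3}+\binom{n}{3}$. Your positional cross-check is also valid and reaches the formula directly: each fibre $\{\alpha\in\mathrm{Sing}_n:\alpha(k)=j\}$ has size $n^{n-1}-(n-1)!$, independent of $k$ and $j$.
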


\begin{proof}
Since $\mathcal{T}_n = \mathcal{S}_n \sqcup \mathrm{Sing}_n$,
\[
w^\times(\mathrm{Sing}_n) = w^\times(\mathcal{T}_n) - w^\times(\mathcal{S}_n).
\]
Substituting the formulas from Theorems~\ref{thm:Sn} and~\ref{thm:Tn} yields the result.
\end{proof}

\begin{theorem}\label{thm:I}
For $n \ge 2$,
\[
w^\times(\mathcal{I}_n) = 2 \left( \sum_{m=0}^{n-2} \binom{n-2}{m} \frac{(n-1)!}{(n-m-2)!} \right) \left[ \binom{n}{3} + \binom{n+1}{3} \right].
\]
\end{theorem}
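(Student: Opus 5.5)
The plan is to repeat the double-counting argument of Theorem~\ref{thm:Sn} (and Theorem~\ref{thm:Tn}), with one change: we must count partial injections in which both $i$ and $i+1$ lie in the domain, since otherwise the local term at $(i,i+1)$ vanishes by Definition~\ref{def:local_crosswork}. As in the proof of Theorem~\ref{thm:Sn}, I would split
\[
w^\times(\mathcal I_n)=\sum_{i=1}^{n-1}\sum_{\substack{\alpha\in\mathcal I_n\\ i,i+1\in\operatorname{dom}(\alpha)}}|i-\alpha(i+1)|+\sum_{i=1}^{n-1}\sum_{\substack{\alpha\in\mathcal I_n\\ i,i+1\in\operatorname{dom}(\alpha)}}|i+1-\alpha(i)|,
\]
and interchange the order of summation.

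The key step is a counting lemma. Fix $i$ and a value $j\in[n]$, and let $N$ be the number of $\alpha\in\mathcal I_n$ with $i,i+1\in\operatorname{dom}(\alpha)$ and $\alpha(i+1)=j$. I claim $N$ does not depend on $i$ or $j$. To see this, note that $\alpha(i)$ may be any of the $n-1$ values different from $j$. The restriction of $\alpha$ to $[n]\setminus\{i,i+1\}$ is then an arbitrary partial injection from an $(n-2)$-set into the $(n-2)$ remaining values. If its domain has size $m$, there are $\binom{n-2}{m}$ choices of domain, $\binom{n-2}{m}$ choices of image, and $m!$ bijections between them. Hence
\[
N=(n-1)\sum_{m=0}^{n-2}\binom{n-2}{m}^2m!=\sum_{m=0}^{n-2}\binom{n-2}{m}\frac{(n-1)!}{(n-m-2)!},
\]
where the last equality uses $\binom{n-2}{m}m!=(n-2)!/(n-2-m)!$. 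This is exactly the bracketed factor in the statement, so matching it is the only place where care is needed; I expect it to be the main (though mild) obstacle.

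With $N$ uniform, the first double sum equals $N\sum_{i=1}^{n-1}\sum_{j=1}^n|i-j|$. The computation in the proof of Theorem~\ref{thm:Sn}, which uses the hockey-stick identity, evaluates this inner double sum as $\binom{n+1}{3}+\binom{n}{3}$.

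For the second sum, the same counting applies with the roles of $i$ and $i+1$ exchanged: fixing $\alpha(i)=j$ again leaves $N$ completions. The sum therefore equals $N\sum_{i=1}^{n-1}\sum_{j=1}^n|i+1-j|$. Substituting $k=n+1-i$ and $j'=n+1-j$ turns this into the same double sum (equivalently, use the reversal symmetry of Proposition~\ref{prop:reversal-invariance}). Adding the two equal contributions yields $2N\bigl[\binom{n}{3}+\binom{n+1}{3}\bigr]$, as claimed.
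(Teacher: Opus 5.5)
Your proof is correct and follows the paper's argument. You use the same uniform count $C_n(i,j)=(n-1)\sum_{m=0}^{n-2}\binom{n-2}{m}\frac{(n-2)!}{(n-m-2)!}$ of partial injections with $i,i+1\in\operatorname{dom}(\alpha)$ and $\alpha(i+1)=j$, and the same reduction to $\sum_{i=1}^{n-1}\sum_{j=1}^{n}|i-j|=\binom{n+1}{3}+\binom{n}{3}$.

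The one place you diverge is the second sum, and there your route is the sound one. The paper gets $S_1=S_2$ from the involution $\varphi$ that swaps $\alpha(i)$ and $\alpha(i+1)$. But $\varphi(\alpha)(i)=\alpha(i+1)$, so its claimed identity $|i-\alpha(i+1)|=|i+1-\varphi(\alpha)(i)|$ would say $|i-\alpha(i+1)|=|i+1-\alpha(i+1)|$, which is false. In fact the two sums differ in general for fixed $i$ and agree only after pairing $i$ with $n-i$. That pairing is exactly what your recount with $\alpha(i)=j$, followed by the reflection $j\mapsto n+1-j$, achieves.

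A small slip: the substitution that lands exactly on the first double sum is $k=n-i$. With your $k=n+1-i$ you need one further shift $k\mapsto k-1$.
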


\begin{proof}
Write
\[
w^\times(\mathcal{I}_n) = \sum_{\alpha \in \mathcal{I}_n} \sum_{i=1}^{n-1} \bigl(\delta_i^L(\alpha) + \delta_i^R(\alpha)\bigr).
\]
Define
\[
S_1 = \sum_{i=1}^{n-1} \sum_{\substack{\alpha \in \mathcal{I}_n \\ i,i+1 \in \operatorname{dom}(\alpha)}} |i-\alpha(i+1)|, \quad
S_2 = \sum_{i=1}^{n-1} \sum_{\substack{\alpha \in \mathcal{I}_n \\ i,i+1 \in \operatorname{dom}(\alpha)}} |i+1-\alpha(i)|.
\]
There is an involution $\varphi$ on the set of injections $\alpha$ with $i,i+1 \in \operatorname{dom}(\alpha)$ defined by swapping $\alpha(i)$ and $\alpha(i+1)$. Under $\varphi$, the term $|i-\alpha(i+1)|$ for $\alpha$ equals $|i+1-\varphi(\alpha)(i)|$ for $\varphi(\alpha)$. Summing over all such $\alpha$ gives $S_1 = S_2$, hence $w^\times(\mathcal{I}_n) = 2S_1$.

Now fix $i$. For each $j \in [n]$, let
\[
C_n(i,j) = \bigl| \{ \alpha \in \mathcal{I}_n : i,i+1 \in \operatorname{dom}(\alpha),\; \alpha(i+1)=j \} \bigr|.
\]
Then
\[
\sum_{\substack{\alpha \in \mathcal{I}_n \\ i,i+1 \in \operatorname{dom}(\alpha)}} |i-\alpha(i+1)| = \sum_{j=1}^n |i-j| \, C_n(i,j).
\]
We claim $C_n(i,j)$ is independent of $i$ and $j$. Indeed, given $i$ and $j$, choose $\alpha(i)$ from $[n]\setminus\{j\}$ ($n-1$ choices). Then, for each \( m \) with \( 0 \le m \le n-2 \), choose any subset $D' \subseteq [n]\setminus\{i,i+1\}$ of size $m$ to be the rest of the domain (there are \( \binom{n-2}{m} \) such subsets), and injectively map $D'$ to $[n]\setminus\{\alpha(i),j\}$ in $\frac{(n-2)!}{(n-m-2)!}$ ways. Hence
\[
C_n(i,j) = \sum_{m=0}^{n-2} \binom{n-2}{m} \frac{(n-2)!}{(n-m-2)!} (n-1),
\]
an expression that clearly depends on neither $i$ nor $j$. Denote this common value by $C_n$. Then
\[
S_1 = C_n \sum_{i=1}^{n-1} \sum_{j=1}^n |i-j| = C_n \left[ \binom{n+1}{3} + \binom{n}{3} \right].
\]
Let $T_n(i) = \bigl| \{ \alpha \in \mathcal{I}_n : i,i+1 \in \operatorname{dom}(\alpha) \} \bigr|$. Clearly, $T_n(i) = n C_n$. Alternatively, we can compute \( T_n(i) \) directly. For a fixed \( i \), choose a domain \( D \supseteq \{i,i+1\} \) of size \( k \) where \( 2 \le k \le n \). The number of such domains is \( \binom{n-2}{k-2} \). For each \( D \), the number of injections \( \alpha : D \to [n] \) is \( \dfrac{n!}{(n-k)!} \). Hence
\[
T_n(i) = \sum_{k=2}^n \binom{n-2}{k-2} \frac{n!}{(n-k)!}.
\]
Setting \( m = k-2 \) yields
\[
T_n(i) = \sum_{m=0}^{n-2} \binom{n-2}{m} \frac{n!}{(n-m-2)!} =: T_n.
\]
Thus $C_n = T_n/n$, and
\[
w^\times(\mathcal{I}_n) = \frac{2T_n}{n} \left[ \binom{n+1}{3} + \binom{n}{3} \right].
\]
Substituting $T_n$ yields the stated formula.
\end{proof}

\begin{lemma}\label{lem:N}
Fix $i \in [n-1]$ and $1 \le a < b \le n$. The number of $\alpha \in \mathcal{POI}_n$ with $i,i+1 \in \operatorname{dom}(\alpha)$, $\alpha(i)=a$, $\alpha(i+1)=b$ is
\[
N(i,a,b) = \binom{i+a-2}{i-1} \binom{2n-i-b-1}{n-i-1}.
\]
\end{lemma}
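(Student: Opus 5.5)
The plan is to decompose such an $\alpha$ into independent left and right pieces, count each piece using Vandermonde's identity, and multiply.

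\textbf{Decomposition.} Let $\alpha\in\mathcal{POI}_n$ satisfy $i,i+1\in\operatorname{dom}(\alpha)$, $\alpha(i)=a$ and $\alpha(i+1)=b$. Split the rest of the domain into
\[
D_L=\operatorname{dom}(\alpha)\cap\{1,\dots,i-1\},\qquad D_R=\operatorname{dom}(\alpha)\cap\{i+2,\dots,n\}.
\]
Since $\alpha$ is order-preserving and injective, every $x\in D_L$ satisfies $\alpha(x)<\alpha(i)=a$. Likewise every $y\in D_R$ satisfies $\alpha(y)>\alpha(i+1)=b$. Hence $\alpha|_{D_L}$ is an order-preserving partial injection from $\{1,\dots,i-1\}$ into $\{1,\dots,a-1\}$. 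Similarly $\alpha|_{D_R}$ is an order-preserving partial injection from $\{i+2,\dots,n\}$ into $\{b+1,\dots,n\}$.

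\textbf{The map is a bijection.} Conversely, take any such pair of restrictions, and set $\alpha(i)=a$, $\alpha(i+1)=b$. The result is order-preserving and injective. Indeed, the images are then stacked in the order: left images $<a<b<$ right images, which matches the order of the domain points. So the assignment $\alpha\mapsto(\alpha|_{D_L},\alpha|_{D_R})$ is a bijection onto the product of the two sets. Therefore
\[
N(i,a,b)=P(i-1,a-1)\,P(n-i-1,n-b),
\]
where $P(p,q)$ denotes the number of order-preserving partial injections from a $p$-element chain into a $q$-element chain.

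\textbf{Counting $P(p,q)$.} An order-preserving partial injection of this kind is determined by a domain of size $k$ together with an image of size $k$: monotonicity then forces the matching. Summing over $k$ and applying Vandermonde's identity gives
\[
P(p,q)=\sum_{k\ge0}\binom{p}{k}\binom{q}{k}=\sum_{k\ge0}\binom{p}{k}\binom{q}{q-k}=\binom{p+q}{p}.
\]

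\textbf{Conclusion.} Substituting $(p,q)=(i-1,a-1)$ gives $\binom{i+a-2}{i-1}$. Substituting $(p,q)=(n-i-1,n-b)$ gives $\binom{2n-i-b-1}{n-i-1}$. Their product is the claimed formula for $N(i,a,b)$.

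The only step needing care is the bijection: one must check that order-preservation forces the separation of images on both sides, and conversely that gluing the two pieces never violates monotonicity. Both facts follow from the stacking observation above. The remaining steps are routine.
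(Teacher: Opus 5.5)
Your proposal is correct and takes essentially the same route as the paper. Both split the domain into the part left of $i$ and the part right of $i+1$, count each side by choosing a domain and an image of equal size (monotonicity forces the matching), and sum using Vandermonde's identity $\sum_k\binom{p}{k}\binom{q}{k}=\binom{p+q}{p}$. Your explicit check that gluing two such pieces with $a<b$ always gives an element of $\mathcal{POI}_n$ supplies the converse direction, which the paper leaves implicit.
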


\begin{proof}
Let $D = \operatorname{dom}(\alpha)$. Since $\alpha$ is order-preserving, if $x \in D$ with $x < i$, then $\alpha(x) < a$; if $x \in D$ with $x > i+1$, then $\alpha(x) > b$.

Let $m_1 = |\{x \in D : x < i\}|$ and $m_2 = |\{x \in D : x > i+1\}|$. Choosing $m_1$ elements from $\{1,\dots,i-1\}$ and their images from $\{1,\dots,a-1\}$ in increasing order gives $\binom{i-1}{m_1} \binom{a-1}{m_1}$ possibilities. Similarly, choosing $m_2$ elements from $\{i+2,\dots,n\}$ and their images from $\{b+1,\dots,n\}$ gives $\binom{n-i-1}{m_2} \binom{n-b}{m_2}$ possibilities. Thus the number for given $(m_1,m_2)$ is $\binom{i-1}{m_1}\binom{a-1}{m_1} \binom{n-i-1}{m_2}\binom{n-b}{m_2}$. Summing over $m_1,m_2$ and applying Vandermonde's identity $\sum_{k=0}^r \binom{r}{k}\binom{s}{k} = \binom{r+s}{r}$ yields
\[
\sum_{m_1=0}^{i-1} \binom{i-1}{m_1}\binom{a-1}{m_1} = \binom{i+a-2}{i-1},
\]
\[
\sum_{m_2=0}^{n-i-1} \binom{n-i-1}{m_2}\binom{n-b}{m_2} = \binom{2n-i-b-1}{n-i-1}.
\]
Multiplying gives the desired result.
\end{proof}

\begin{theorem}\label{thm:POI}
For every integer $n \ge 2$,
\[
w^\times(\mathcal{POI}_n) = 2 \sum_{i=1}^{n-1} \sum_{1 \le a < b \le n} |i-b| \cdot \binom{i+a-2}{i-1} \binom{2n-i-b-1}{n-i-1}.
\]
\end{theorem}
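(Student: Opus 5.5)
The plan is to split the total cross work on $\mathcal{POI}_n$ into its left and right parts and to handle each pair $(i,i+1)$ by sorting maps according to their two values there. Put
\[
S_L=\sum_{i=1}^{n-1}\sum_{\substack{\alpha\in\mathcal{POI}_n\\ i,i+1\in\operatorname{dom}(\alpha)}}|i-\alpha(i+1)|,\qquad
S_R=\sum_{i=1}^{n-1}\sum_{\substack{\alpha\in\mathcal{POI}_n\\ i,i+1\in\operatorname{dom}(\alpha)}}|i+1-\alpha(i)|,
\]
so that $w^\times(\mathcal{POI}_n)=S_L+S_R$, exchanging the order of summation as in Theorem~\ref{thm:Sn}. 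Fix $i$. Every $\alpha\in\mathcal{POI}_n$ with $i,i+1\in\operatorname{dom}(\alpha)$ has $a=\alpha(i)<\alpha(i+1)=b$, because $\alpha$ is order-preserving and injective. Grouping such $\alpha$ by the pair $(a,b)$ and applying Lemma~\ref{lem:N} gives
\[
S_L=\sum_{i=1}^{n-1}\sum_{1\le a<b\le n}|i-b|\,N(i,a,b).
\]
This is exactly half of the claimed expression. It therefore remains to show $S_L=S_R$.

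To prove $S_L=S_R$, I would mimic Proposition~\ref{prop:reversal-invariance}. Let $\tau(x)=n+1-x$ and $\sigma=\tau\alpha\tau$, with $\operatorname{dom}(\sigma)=\tau(\operatorname{dom}(\alpha))$. Since $\tau$ reverses order twice, $\sigma$ is again a partial order-preserving injection. Hence $\alpha\mapsto\tau\alpha\tau$ is an involution of $\mathcal{POI}_n$. Moreover, $i,i+1\in\operatorname{dom}(\alpha)$ if and only if $i',i'+1\in\operatorname{dom}(\sigma)$, where $i'=n-i$. In that case $\sigma(i'+1)=n+1-\alpha(i)$, and so
\[
|i'-\sigma(i'+1)|=|(n-i)-(n+1-\alpha(i))|=|i+1-\alpha(i)|.
\]
As $i$ runs over $1,\dots,n-1$, so does $i'$, and the map is a bijection on the relevant maps. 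Therefore the left contributions of all $\sigma$ reproduce the right contributions of all $\alpha$, which gives $S_R=S_L$.

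Combining the two steps yields $w^\times(\mathcal{POI}_n)=2S_L$, which is the stated formula.

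The only delicate point is the reindexing in the symmetry step: checking that the domain condition, the index shift $i\mapsto n-i$ and the value map $x\mapsto n+1-x$ match exactly. As a consistency check, I would verify that
\[
N(n-i,\,n+1-b,\,n+1-a)=N(i,a,b),
\]
which follows directly from the binomial formula in Lemma~\ref{lem:N}. This confirms the bijection at the level of the counts.
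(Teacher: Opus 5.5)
Your proposal is correct and follows the paper's proof: split the total into left and right parts, evaluate the left part with Lemma~\ref{lem:N}, and identify the two parts by symmetry. The paper only asserts $S_1=S_2$ ``by symmetry,'' whereas your reversal conjugation $\alpha\mapsto\tau\alpha\tau$ actually supplies that step; unlike the swap involution used for $\mathcal I_n$, it stays inside $\mathcal{POI}_n$, and your identity $N(n-i,n+1-b,n+1-a)=N(i,a,b)$ confirms it at the level of the counts.
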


\begin{proof}
For $\alpha \in \mathcal{POI}_n$, the term $|i-\alpha(i+1)|$ contributes for each $i$ with $i,i+1 \in \operatorname{dom}(\alpha)$. Summing over all such $\alpha$ gives
\[
S_1 = \sum_{i=1}^{n-1} \sum_{1 \le a < b \le n} |i-b| \cdot N(i,a,b),
\]
where $a=\alpha(i)$, $b=\alpha(i+1)$. Similarly,
\[
S_2 = \sum_{i=1}^{n-1} \sum_{1 \le a < b \le n} |i+1-a| \cdot N(i,a,b).
\]
By symmetry, $S_1 = S_2$. Hence $w^\times(\mathcal{POI}_n) = 2S_1$. Substituting $N(i,a,b)$ from Lemma~\ref{lem:N} gives the result.
\end{proof}

\begin{remark}
	For completeness, Table~\ref{tab:values} records the values of \(w^{\times}\) on the semigroups considered above for \(2\le n\le 10\); the fifth column is \(w^{\times}(\mathcal I_n)\) and the sixth is \(w^{\times}(\mathcal{POI}_n)\). 

	\begin{table}[ht]
	\centering
	\caption{Total cross work on the semigroups
		$\mathcal S_n$, $\mathcal T_n$, $\mathrm{Sing}_n$, $\mathcal I_n$,
		and $\mathcal{POI}_n$ for $2 \le n \le 10$}
	\label{tab:values}
	\begin{tabular}{c|r|r|r|r|r}
	$n$
	& $w^\times(\mathcal{S}_n)$
	& $w^\times(\mathcal{T}_n)$
	& $w^\times(\mathrm{Sing}_n)$
	& $w^\times(\mathcal{I}_n)$
	& $w^\times(\mathcal{POI}_n)$ \\
	\hline
	2 & 2 & 4 & 2 & 2 & 2 \\
	3 & 20 & 90 & 70 & 40 & 18 \\
	4 & 168 & 1,792 & 1,624 & 588 & 112 \\
	5 & 1,440 & 37,500 & 36,060 & 8,160 & 604 \\
	6 & 13,200 & 855,360 & 842,160 & 114,950 & 3,024 \\
	7 & 131,040 & 21,412,118 & 21,281,078 & 1,688,232 & 14,474 \\
	8 & 1,411,200 & 587,202,560 & 585,791,360 & 26,120,920 & 67,232 \\
	9 & 16,450,560 & 17,563,062,168 & 17,546,611,608 & 427,329,408 & 305,700 \\
	10 & 206,841,600 & 570,000,000,000 & 569,793,158,400 & 7,396,069,770 & 1,368,016
	\end{tabular}
	\vspace{2mm}
	
	\begin{minipage}{0.8\linewidth}
		\footnotesize
		All entries independently verified by brute-force enumeration for \(2 \le n \le 8\) and by the formulas of \S~\ref{sec:main_results} for \(2 \le n \le 10\).
	\end{minipage}
	\end{table}
	
	\begin{figure}[htbp]
	\centering
	\vspace{-80pt}
	\includegraphics[width=0.85\textwidth]{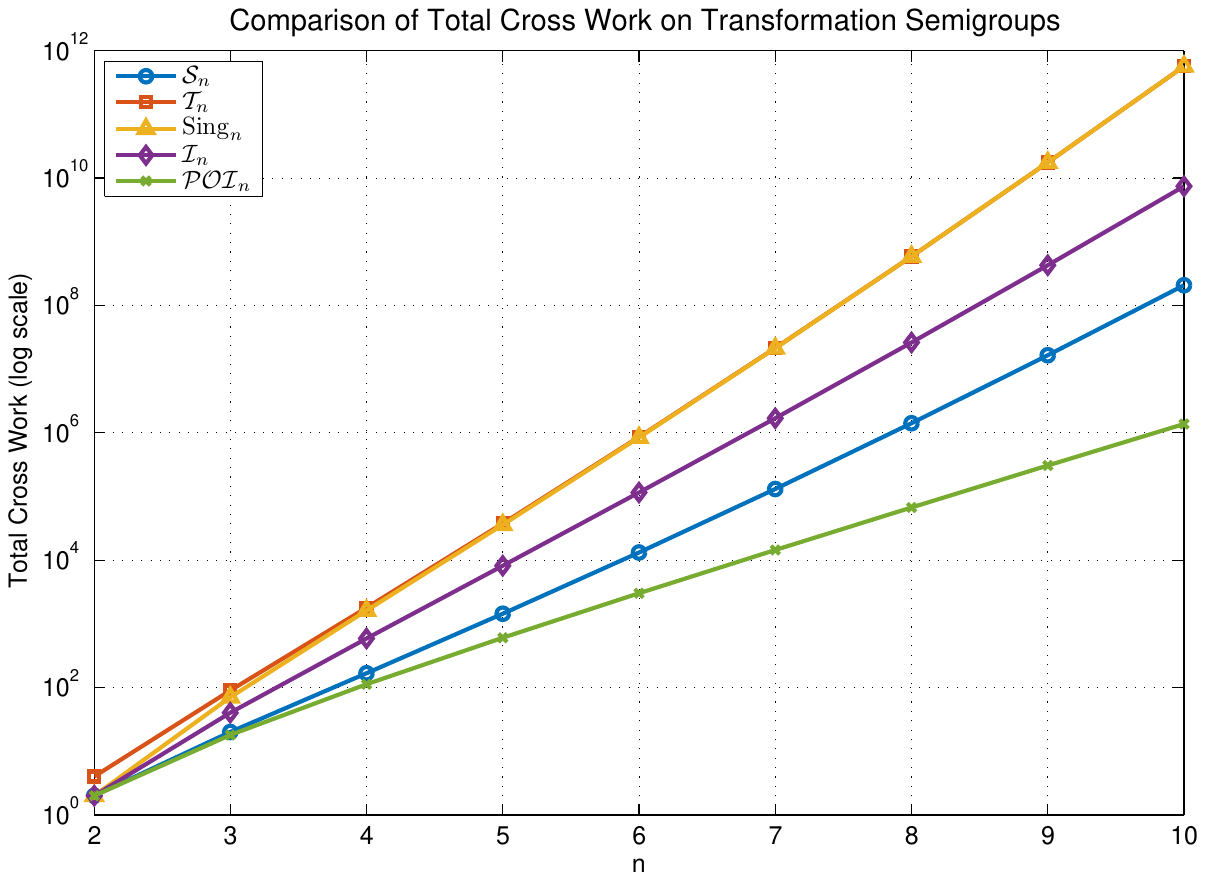}
	\vspace{-105pt}
	\caption{Comparison of total cross work on various transformation semigroups for $2 \le n \le 10$.}
	\label{fig:comparison}
	\end{figure}
\end{remark}

\begin{remark}
Figure~\ref{fig:comparison} illustrates several structural phenomena. The total cross work on $\mathcal{T}_n$ and $\mathrm{Sing}_n$ grows super-exponentially, with $\mathrm{Sing}_n$ asymptotically dominating the full semigroup. The symmetric inverse semigroup $\mathcal{I}_n$ exhibits comparable growth, reflecting its factorial rank structure. In contrast, $\mathcal{POI}_n$ grows polynomially-exponentially, highlighting the rigidity imposed by order preservation.
\end{remark}

\section*{Concluding remarks}
We introduced the cross work as a new numerical invariant on the symmetric group $\mathcal{S}_n$, the semigroup of singular transformations $\mathrm{Sing}_n$, the symmetric inverse semigroup $\mathcal{I}_n$, and the monoid $\mathcal{POI}_n$ of partial order-preserving injective transformations. This statistic captures local adjacency-based displacement induced by finite transformations. We established its fundamental invariance properties, including inversion invariance under a natural symmetry condition and a complete characterization of the conjugations preserving the statistic. We also derived closed formulas for the total and average cross work on $\mathcal{S}_n$, and determined its exact minimum and maximum values. These results show that the minimum and maximum cross work grow linearly and quadratically with $n$, respectively, while the numbers of minimizing and maximizing permutations are given by the classical OEIS sequences \cite[A000045]{OEIS2024} and \cite[A180064]{OEIS2024}, respectively (with the appropriate indexing). Taken together, these results establish the cross work as a well-defined and computable invariant arising naturally from transformation semigroup theory, and they provide a basis for further investigations of adjacency-based statistics on permutations and more general transformation semigroups.\\

\noindent\textbf{Acknowledgements:}
The authors thank Bal\'azs Szendr\"oi and M.~Dolors Magret for their valuable comments that improved this work.

\vspace{2em}

\noindent
\textsc{University of Calabar, Nigeria} \\
\textit{Email address:} \texttt{nsibietudo@unical.edu.ng}

\vspace{1em}

\noindent
\textsc{University of Abuja, Nigeria} \\
\textit{Email address:} \texttt{owaiakugom@gmail.com}

\vspace{1em}

\noindent
\textsc{Bingham University, Kari, Nigeria} \\
\textit{Email address:} \texttt{francis-moses.obinna@binghamuni.edu.ng}


\begin{thebibliography}{99}

\bibitem{AlbertLiStrangYu2011}
 C.~Albert, C.~K.~Li, G.~Strang and G.~Yu, 
 \textit{Permutations as product of parallel transpositions}, SIAM Journal on Discrete Mathematics, 25(3), (2011), 1412-1417.

\bibitem{Bona2012}
M.~Bóna,
\textit{Combinatorics of Permutations},
Second Edition,
CRC Press, Boca Raton, (2012).

\bibitem{BurkardKlinzRudolf1996}
R.~E.~Burkard, B.~Klinz and R.~Rudolf,
\textit{Perspectives of {M}onge properties in optimization},
Discrete Applied Mathematics, 70(2), (1996), 95--161.

\bibitem{DalyVojtechovsky2009}
D.~Daly and P.~Vojtěchovský,
\textit{How permutations displace points and stretch intervals},
Ars Combinatoria,
90 (2009), 175--191.

\bibitem{DiaconisGraham1977}
P.~Diaconis and R.~L.~Graham,
\textit{Spearman's footrule as a measure of disarray},
Journal of the Royal Statistical Society, Series~B,
39(2), (1977), 262--268.

\bibitem{EastMcNamara2011}
J.~East and P.~J.~McNamara,
\textit{On the work performed by a transformation semigroup},
Australasian Journal of Combinatorics,
49 (2011), 95--109.

\bibitem{FrancisAdenijiMogbonju2023}
M.~O.~Francis, A.~O.~Adeniji, and M.~M.~Mogbonju,
\textit{Work done by $m$-topological transformation semigroup regular spaces $M_{\psi n}$},
International Journal of Mathematical Sciences and Optimization,
9(1), (2023), 33--42.

\bibitem{FrancisAdeniji2025}
M.~O.~Francis and A.~O.~Adeniji,
\textit{Work performed by closed and clopen $m$-topological full transformation semigroup spaces},
Unilag Journal of Mathematics and Applications,
5(1), (2025), 51--62.

\bibitem{Howie1995}
J.~M.~Howie,
\textit{Fundamentals of Semigroup Theory},
London Mathematical Society Monographs,
Oxford University Press, Oxford, (1995).

\bibitem{Kehinde.et.al.2020}
R.~Kehinde, I.~L.~David, A.~I.~Ma'ali, A.~Abdulrahman, and O.~H.~Abdulazeez,
\textit{The numerical work done by transformations on a symmetric group},
International Journal of Innovation in Science and Mathematics,
8(5), (2020), 252--258.

\bibitem{KorteVygen2018}
B.~Korte and J.~Vygen,
\textit{Combinatorial Optimization: Theory and Algorithms}, 6 edn., Springer, (2018).

\bibitem{LaradjiUmar2004}
A.~Laradji and A.~Umar,
\textit{Combinatorial results for semigroups of order-preserving partial transformations},
Journal of Algebra,
278 (2004), 342--359.

\bibitem{Lavers2004}
T.~Lavers,
\textit{Semigroup invariants and transformation complexity},
Semigroup Special Interest Meeting, Sydney, (2004) (unpublished lecture).

\bibitem{MacMahon1915}
P.~A.~MacMahon,
\textit{Combinatory Analysis, Vol.~1},
Cambridge University Press, Cambridge, (1915).

\bibitem{Panova2010} G.~Panova, \textit{Factorization of banded permutations}, Proceedings of the American Mathematical Society, 140(11), (2012), 3805-3812.\href{https://arxiv.org/abs/1007.1760v2}{arXiv:1007.1760v2}

\bibitem{SamsonEzerman2012} M.~D.~Samson and M.~F.~Ezerman, \textit{Factoring Permutation Matrices Into a Product of Tridiagonal Matrices}, (2010) \href{https://arxiv.org/abs/1007.3467v1}{arXiv:1007.3467v1}.

\bibitem{OEIS2024} N.~J.~A.~Sloane, The On-Line Encyclopedia of Integer Sequences, Available at \url{https://oeis.org}.

\bibitem{Stanley2012}
R.~P.~Stanley,
\textit{Enumerative Combinatorics, Volume~1},
Second Edition,
Cambridge University Press, Cambridge, (2012).

\bibitem{Umar2016}
A.~Umar,
\textit{Some combinatorial problems in the theory of partial transformation semigroups},
Algebra and Discrete Mathematics,
17(1), (2014), 1--26.

\end{thebibliography}
\end{document}